\documentclass{article}
\usepackage[english]{babel} 		
\usepackage{fullpage}				
\usepackage[utf8]{inputenc}			
\usepackage{amsmath, amssymb,amsthm}
\usepackage{graphicx}				
\usepackage{hyperref}				
\usepackage{xcolor}
\usepackage{multirow}
\usepackage{tikz}
\usepackage{ytableau}
\usepackage{mathrsfs}
\usepackage{thmtools}
\usepackage{thm-restate}
\usepackage{varwidth}
\usepackage{comment}
\usepackage{mathtools}
\usepackage[mathscr]{euscript}
\usepackage{bbm}
\usepackage{multicol}
\usepackage{enumerate}
\usepackage{graphicx} 
\theoremstyle{definition}
\newtheorem{thm}{Theorem}[section]
\newtheorem{lem}[thm]{Lemma}
\newtheorem*{lem*}{Lemma}
\newtheorem*{thm*}{Theorem}
\newtheorem{prop}[thm]{Proposition}

\newtheorem{cor}[thm]{Corollary}

\newtheorem{defn}[thm]{Definition}
\newtheorem*{remark*}{Remark}
\newtheorem{remark}{Remark}
\newtheorem{example}{Example}

\newtheorem{cor/defn}[thm]{Corollary/Definition}

\DeclareMathOperator{\SYT}{\mathrm{SYT}}

\DeclareMathOperator{\sH}{\mathscr{H} }
\DeclareMathOperator{\sA}{\mathscr{A} }
\DeclareMathOperator{\sD}{\mathscr{D} }
\DeclareMathOperator{\sP}{\mathscr{P} }
\DeclareMathOperator{\Ind}{Ind}
\DeclareMathOperator{\Res}{Res}

\DeclareMathOperator{\MOD}{\mathrm{Mod}}

\DeclareMathOperator{\bB}{\mathbb{B}_{q,t}}
\DeclareMathOperator{\fL}{\mathfrak{L}}
\DeclareMathOperator{\pol}{\mathrm{pol}}
\DeclareMathOperator{\ext}{\mathrm{ext}}
\DeclareMathOperator{\Exp}{\mathrm{Exp}}

\usepackage[
    backend=bibtex,
    maxnames=5,
    style=alphabetic
  ]{biblatex}
\addbibresource{main.bib}
\usepackage{graphicx} 

\title{Double Dyck Path Algebra Representations From DAHA}
\author{Milo Bechtloff Weising}
\date{\today}

\begin{document}

\maketitle

\abstract{The double Dyck path algebra $\mathbb{A}_{q,t}$ was introduced by Carlsson-Mellit in their proof of the Shuffle Theorem. A variant of this algebra, $\mathbb{B}_{q,t}$, was introduced by Carlsson-Gorsky-Mellit in their study of the parabolic flag Hilbert schemes of points in $\mathbb{C}^2$ showing that $\mathbb{B}_{q,t}$ acts naturally on the equivariant $K$-theory of these spaces. The algebraic relations defining $\mathbb{B}_{q,t}$ appear superficially similar to those of the positive double affine Hecke algebras (DAHA) in type $GL$, $\mathscr{D}_n^{+}$, introduced by Cherednik. In this paper we provide a general method for constructing $\mathbb{B}_{q,t}$ representations from DAHA representations. In particular, every $\mathscr{D}_n^{+}$ module yields a representation of a subalgebra $\mathbb{B}_{q,t}^{(n)}$ of $\mathbb{B}_{q,t}$ and special families of compatible DAHA representations give representations of $\mathbb{B}_{q,t}$. These constructions are functorial. Lastly, we will construct a large family of $\mathbb{B}_{q,t}$ representations indexed by partitions using this method related to the Murnaghan-type representations of the positive elliptic Hall algebra introduced previously by the author.}

\tableofcontents

\section{Introduction}

The algebra $\mathbb{B}_{q,t}$ was introduced by Carlsson-Gorsky-Mellit \cite{GCM_2017} as an algebra which has a natural geometric action on the equivariant $K$-theory of the parabolic flag Hilbert schemes of points in $\mathbb{C}^2.$ This work built on the prior work of Schiffmann-Vasserot \cite{SV} who constructed a geometric action of the elliptic Hall algebra $\mathcal{E}$ on the equivariant $K$-theory of the Hilbert schemes of points in $\mathbb{C}^2.$ These construction are part of a larger story in Macdonald theory of relating geometric properties of the Hilbert schemes of points in $\mathbb{C}^2$ to the algebraic combinatorics underlying the modified Macdonald symmetric functions $\widetilde{H}_{\mu}$ and of the Macdonald operator $\Delta$ (which acts on the space of symmetric functions $\Lambda$). Importantly, $\mathbb{B}_{q,t}$ is intimately related to the \textit{double Dyck path algebra} $\mathbb{A}_{q,t}$ introduced by Carlsson-Mellit in their proof of the Shuffle Theorem \cite{CM_2015} regarding the Frobenius character of the space of diagonal coinvariants and the combinatorics of Dyck paths. 

The quiver path algebra $\mathbb{B}_{q,t}$ has relations very similar to the \textit{positive double affine Hecke algebras} (DAHA) in type $GL$, $\sD_n^{+}$, introduced by Cherednik \cite{C_2001}. In fact, $\mathbb{B}_{q,t}$ contains many copies of \textit{affine Hecke algebras} of type $GL.$ However, there is no direct algebraic relation (no algebra homomorphisms) between $\mathbb{B}_{q,t}$ (nor $\mathbb{A}_{q,t}$) and DAHAs. Nevertheless, there are approaches to more indirectly relate these algebras. Ion-Wu \cite{Ion_2022} defined an algebra called the \textit{stable-limit DAHA} along with a polynomial representation on the space of \textit{almost symmetric functions} $\sP_{as}^{+}$ which, in a sense, globalizes the polynomial representation of $\mathbb{A}_{q,t}$ (and as we will see later $\mathbb{B}_{q,t}$). They used a stable-limit procedure to define this representation from the polynomial representations of the finite rank DAHAs $\sD_n^{+}$. This representation of the stable-limit DAHA is much larger than the polynomial representation of $\mathbb{B}_{q,t}$ but the limit Cherednik operators of Ion-Wu, in a sense, behave better on a certain subspace of $\sP_{as}^{+}$ given by the following direct sum:
$$\bigoplus_{k \geq 0} x_1\cdots x_k\mathbb{Q}(q,t)[x_1,\ldots,x_k] \otimes \Lambda .$$ This subspace aligns with the polynomial representation of $\mathbb{B}_{q,t}.$

Motivated by the construction of Ion-Wu we will in this paper develop a method for constructing modules for $\mathbb{B}_{q,t}$ directly from the representation theory of DAHA in type $GL.$ We will use a stable-limit construction similar to Ion-Wu but will not require any additional non-archimedean topological considerations as they did. First, we will show (Proposition \ref{finite rank main construction}) that given any $\sD_n^{+}$ module $V$ we may construct an action of the subalgebra $\mathbb{B}_{q,t}^{(n)}$ on the space $L_{\bullet}(V)$ defined by 
$$L_{\bullet}(V)= \bigoplus_{0\leq k \leq n} L_{k}(V) := \bigoplus_{0\leq k \leq n} X_1\cdots X_k \epsilon_k(V).$$
Here $\epsilon_k$ are the \textit{partial trivial idempotents} of the finite Hecke algebra. Each space may be considered as a module for the partially symmetrized positive DAHA, $\epsilon_k\sD_n^{+}\epsilon_k.$ It will be immediate to show (Theorem \ref{finite rank functorial}) that the map $V \rightarrow L_{\bullet}(V)$ is indeed a functor. We show (Proposition \ref{quotient map prop for poly}) that in the case of the polynomial representations $V_{\pol}^{(n)}$ of DAHA that $L_{\bullet}(V_{\pol}^{(n)})$ is a $\mathbb{B}_{q,t}^{(n)}$-module quotient of the restriction of the polynomial representation of $\mathbb{B}_{q,t}$ to $\mathbb{B}_{q,t}^{(n)}$.

Afterwards, we will use stable-limits of the representations $L_{\bullet}(V)$ of $\mathbb{B}_{q,t}^{(n)}$ to build representations of $\mathbb{B}_{q,t}.$ This construction will require the input of an infinite family of representations of DAHAs, $(V^{(n)})_{n \geq n_0}$, along with some connecting maps, $\Pi^{(n)}: V^{(n+1)} \rightarrow V^{(n)}$, satisfying some special assumptions. Most interestingly, we require that the following relations holds:
$$\Pi^{(n)}\pi^{(n+1)}T_n = \pi^{(n)}\Pi^{(n)}.$$ This is the same relation used by Ion-Wu in their construction of the limit Cherednik operators and is related to certain natural embeddings of the extended affine symmetric groups $\widetilde{\mathfrak{S}}_n \hookrightarrow \widetilde{\mathfrak{S}}_{n+1}.$ We call such families $C = \left( (V^{(n)})_{n \geq n_1}, (\Pi^{(n)})_{n\geq n_1}\right)$ \textit{compatible} and construct spaces $\fL_k(C)$ given by
$$\fL_k(C):= \lim_{\leftarrow} L_k(V^{(n)}).$$ These are the stable-limits of the spaces $L_k(V^{(n)})$ with respect to the maps $\Pi^{(n)}.$ Finally, we package together these spaces to form $\fL_{\bullet}(C)$ given as 
$$\fL_{\bullet}(C):= \bigoplus_{k \geq 0} \fL_k(C)$$
which may be also thought of as the stable-limit of the $\mathbb{B}_{q,t}^{(n)}$ modules $L_{\bullet}(V^{(n)}).$ We show (Theorem \ref{main theorem}) that there is a natural action of $\mathbb{B}_{q,t}$ on $\fL_{\bullet}(C)$ determined by the $\mathbb{B}_{q,t}^{(n)}$ module structures on $L_{\bullet}(V^{(n)}).$ This construction is also functorial.

Lastly, we will use our construction of the functor $C \rightarrow \fL_{\bullet}(C)$ to define (Theorem \ref{Murnaghan-type reps}) a large family of $\mathbb{B}_{q,t}$ modules, $\fL_{\bullet}(\Ind(C_{\lambda}))$, indexed by partitions $\lambda.$ These representations in a sense extend the  Murnaghan-type representations of the positive elliptic Hall algebra previously defined by the author \cite{weising2023murnaghantype}. As such we call these the Murnaghan-type representations of $\mathbb{B}_{q,t}.$ For $\lambda = \emptyset$, $\fL_{\bullet}(\Ind(C_{\emptyset}))$ recovers the polynomial representation of $\mathbb{B}_{q,t}.$

In a recent paper Gonz\'{a}lez-Gorsky-Simental \cite{gonzález2023calibrated} defined an extension $\mathbb{B}_{q,t}^{\ext}$ of $\mathbb{B}_{q,t}$ containing certain additional $\Delta$-operators as well as a class of representations of $\mathbb{B}_{q,t}^{\ext}$ called \textit{calibrated} with special properties. Further, they construct a large class of \textit{calibrated} $\mathbb{B}_{q,t}^{\ext}$ representations built from certain posets with exceptional properties. The author conjectures that the Murnaghan-type representations of $\mathbb{B}_{q,t}$, $\fL_{\bullet}(\Ind(C_{\lambda}))$, have extended actions by $\mathbb{B}_{q,t}^{\ext}$ which are calibrated. More generally, there should be a special set of conditions on a compatible sequence $C$ which guarantees that $\fL_{\bullet}(C)$ has an extended action by $\mathbb{B}_{q,t}^{\ext}$ which is calibrated. The author will return to these questions in the future. 

\subsection{Acknowledgements}
The author would like to thank their advisor Monica Vazirani for all of her help and guidance. The author would also to thank Nicolle Gonz\'{a}lez, Eugene Gorsky, and Jos\'{e} Simental for helpful conversations with the author regarding $\mathbb{B}_{q,t}^{\ext}$ and calibrated representations. The author would also like to thank Daniel Orr for fruitful conversations regarding the polynomial representation of $\mathbb{B}_{q,t}.$
\section{Notation and Conventions}

\subsection{Stable-Limits}

In this paper \textit{graded} will mean $\mathbb{Z}_{\geq 0}$-graded for both vector spaces and rings. We fix two indeterminants $q,t$ which are algebraically independent over $\mathbb{Q}$ and consider all vector spaces to be over the base field $\mathbb{Q}(q,t).$ We will write $\deg(v), \deg(r)$ for the degree of either a homogeneous vector $v$ in a graded vector space or a homogeneous element $r$ of a graded ring. For a graded vector space $V$ we will write $V(d)$ for the degree $d \geq 0$ homogeneous component of $V.$ If $R$ is a graded ring then we will write $R-\MOD$ for the category of consisting of graded left $R$ modules as the objects and with degree-preserving homomorphisms (homogeneous maps) as the morphisms.

We now review some formalities regarding stable-limits of spaces and modules. 

\begin{defn}\label{stable-limit def}
    Let $(V^{(n)})_{n \geq n_0}$ be a sequence of graded vector spaces and suppose $(\Pi^{(n)}: V^{(n+1)} \rightarrow V^{(n)})_{n \geq n_0}$ is a family of degree preserving maps. The \textbf{\textit{stable-limit}} of the spaces $(V^{(n)})_{n \geq n_0}$ with respect to the maps $(\Pi^{(n)})_{n \geq n_0}$ is the graded vector space $\widetilde{V} := \lim_{\leftarrow}V^{(n)}$ constructed as follows:
    For each $d \geq 0$ we define 
    $$\widetilde{V}(d):= \{(v_n)_{n \geq n_0} \in \prod_{n\geq n_0}V^{(n)}(d) ~|~ \Pi^{(n)}(v_{n+1}) = v_n\}$$ and set 
    $$\widetilde{V}:= \bigoplus_{d \geq 0} \widetilde{V}(d).$$
\end{defn}

\begin{lem}\label{stable-limit of modules}
    Let $(R^{(n)})_{n \geq n_0}$ be a sequence of graded rings with injective graded ring homomorphisms $(\iota^{(n)}: R^{(n)} \rightarrow R^{(n+1)})_{n \geq n_0}$. We will identify $R^{(n)}$ with its image $\iota_n(R^{(n)}) \subset R^{(n+1)}.$ We write $\widetilde{R} = \lim_{\rightarrow} R^{(n)}$ for the direct limit of the rings $R^{(n)}.$ Suppose $(V^{(n)})_{n \geq n_0}$ is a sequence of graded vector spaces with each $V^{(n)}$ a graded $R^{(n)}$ module and $(\Pi^{(n)}:V^{(n+1)} \rightarrow V^{(n)} )_{n \geq n_0}$ a sequence of degree-preserving maps with each $\Pi^{(n)}$ a graded $R^{(n)}$ module homomorphism. Then the following defines a graded $\widetilde{R}$ module structure on $\widetilde{V}:= \lim_{\leftarrow} V^{(n)}$:
    For $r \in \widetilde{R}$ and $v \in \widetilde{V}$ with $r \in R_{N}$ and $v = (v_{n})_{n \geq n_0}$, define $r(v) \in \widetilde{V}$ by 
    $$r(v) = \left( \Pi^{(n_0)}\cdots \Pi^{(N-1)}(r(v_{N}))  ,\ldots, \Pi^{(N-1)}(r(v_{N})), r(v_{N}), r(v_{N+1}), r(v_{N+2}),\ldots \right).$$
\end{lem}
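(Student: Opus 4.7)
The plan is to verify in turn that the proposed formula (i) produces an element of $\widetilde{V}$, (ii) is independent of the choice of $N$ with $r \in R^{(N)}$, (iii) satisfies the module axioms, and (iv) preserves degree. All four reductions rest on a single observation: because each $\Pi^{(n)}$ is $R^{(n)}$-linear and the $\iota^{(n)}$ are inclusions, each $\Pi^{(n)}$ is automatically $R^{(m)}$-linear for every $m \leq n$.

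For (i), let $w := r(v)$ with components $w_j$. For $j \geq N$ we have $w_j = r(v_j)$, and $\Pi^{(j)}(w_{j+1}) = \Pi^{(j)}(r(v_{j+1})) = r(\Pi^{(j)}(v_{j+1})) = r(v_j) = w_j$, using $r \in R^{(N)} \subseteq R^{(j)}$ and the $R^{(j)}$-linearity of $\Pi^{(j)}$. For $j < N$, the entries $w_j$ are by construction obtained by successively applying $\Pi^{(n_0)}, \ldots, \Pi^{(N-1)}$ to $w_N$, so the compatibility relations hold tautologically. Gradedness (iv) is equally immediate: if $r$ has degree $d_1$ and $v$ has degree $d_2$, each $r(v_n)$ lies in $V^{(n)}(d_1+d_2)$, and the $\Pi^{(n)}$ are degree-preserving.

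Next, for (ii), suppose $r \in R^{(N)} \subseteq R^{(N+1)}$. The prescription using $N+1$ assigns to the $N$-th slot $\Pi^{(N)}(r(v_{N+1})) = r(\Pi^{(N)}(v_{N+1})) = r(v_N)$, matching the prescription using $N$. The entries of index less than $N$ then match because they are determined from the $N$-th by iterated application of the $\Pi^{(j)}$'s, and the entries of index greater than $N$ literally coincide. An easy induction extends this to any larger lift $r \in R^{(N')}$ with $N' > N$.

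Finally, for (iii), additivity is clear from additivity of each $r \cdot (-) \colon V^{(n)} \to V^{(n)}$ and of the $\Pi^{(n)}$. For the associativity $(rs)(v) = r(s(v))$, choose $N$ large enough that both $r, s \in R^{(N)}$; the two sides then agree in every entry of index $j \geq N$ by the $R^{(j)}$-module axioms on $V^{(j)}$, while the entries with $j < N$ are forced by the compatibility already established in (i). Unitality is trivial, since $1 \in R^{(n_0)}$ acts as the identity on each $V^{(n)}$. The only truly subtle step is the bookkeeping in (ii); once one recognises that $\Pi^{(n)}$ intertwines the actions of every subring $R^{(m)} \subseteq R^{(n)}$, all the verifications assemble uniformly from this single intertwining property together with the direct-limit identification $\widetilde{R} = \bigcup_n R^{(n)}$.
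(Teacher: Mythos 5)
Your proof is correct, and it is precisely the routine verification the paper has in mind: the paper itself gives no argument for this lemma, explicitly leaving the check as an exercise in the remark that follows it. Your four-step verification (membership in $\widetilde{V}$, independence of the lift $N$, module axioms, gradedness), all resting on the observation that $\Pi^{(j)}$ intertwines the action of $R^{(N)} \subseteq R^{(j)}$ for $N \leq j$ together with the identification $\widetilde{R} = \bigcup_n R^{(n)}$, is exactly the intended argument and is complete.
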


\begin{remark}\label{functoriality of stable-limit of modules}
It is a straightforward exercise to check that the action defined above actually yields a graded $\widetilde{R}$ module structure on $\lim_{\leftarrow} V^{(n)}.$ We leave this to the reader.  We call $\lim_{\leftarrow} V^{(n)}$ the \textbf{\textit{stable-limit module}} corresponding to the sequence $(V^{(n)})_{n \geq n_0}$ and the maps $(\Pi^{(n)})_{n \geq n_0}.$ Notice that this construction is \textit{functorial}. Suppose $(W^{(n)})_{n \geq n_0}$ is another sequence of graded vector spaces with each $W^{(n)}$ a graded $R^{(n)}$ module and $(\Psi^{(n)}:W^{(n+1)} \rightarrow W^{(n)} )_{n \geq n_0}$ a sequence of degree-preserving maps with each $\Psi^{(n)}$ a graded $R^{(n)}$ module homomorphism and $\phi = (\phi^{(n)})_{n \geq n_0}$ is a family of graded $R^{(n)}$ module maps $\phi^{(n)}: V^{(n)} \rightarrow W^{(n)}$ such that for all $n \geq n_0$
    $$ \phi^{(n)}\Pi^{(n)} = \Psi^{(n)}\phi^{(n+1)}.$$ 
Then $\phi$ determines a graded $\widetilde{R}$ module homomorphism $\widetilde{\phi}: \lim_{\leftarrow} V^{(n)} \rightarrow \lim_{\leftarrow} W^{(n)} $ given by 
$$\widetilde{\phi}(v): = (\phi^{(n)}(v_n))_{n\geq n_0}.$$
\end{remark}

\begin{remark}
    The stable-limit spaces $\widetilde{V} = \lim_{\leftarrow} V^{(n)}$ may be zero even if each $V^{(n)}$ is nonzero. However, if each $V^{(n)}$ is nonzero and the maps $\Pi^{(n)}$ are surjective then $\widetilde{V}$ is nonzero.
\end{remark}

\subsection{Positive Double Affine Hecke Algebras}

\begin{defn} 
Define the \textbf{\textit{positive double affine Hecke algebra}} $\sD_n^{+}$ to be the $\mathbb{Q}(q,t)$-algebra generated by $T_1,\ldots,T_{n-1}$, $X_1,\ldots,X_{n}$, and $Y_1,\ldots, Y_n$ with the following relations:

\begin{multicols}{2}
    \item
    $(T_i -1)(T_i +q) = 0$,
    \item  $T_iT_{i+1}T_i = T_{i+1}T_iT_{i+1}$,
    \item  $T_iT_j = T_jT_i$, $|i-j|>1$,
    \item  $T_i^{-1}X_iT_i^{-1} = q^{-1}X_{i+1}$,
    \item $T_iX_j = X_jT_i$, $j \notin \{i,i+1\}$,
    \item $X_iX_j = X_jX_i$,
    \item $T_iY_iT_i = qY_{i+1}$,
    \item $T_iY_j = Y_jT_i$, $j\notin \{i,i+1\}$,
    \item $Y_iY_j = Y_jY_i$,
    \item $Y_1T_1X_1 = X_2Y_1T_1$,
    \item $Y_1X_1\cdots X_n = tX_1\cdots X_nY_1.$
\end{multicols}
We will write $\sA_n^{X}$ for the subalgebra of $\sD_{n}^{+}$ generated by $T_1,\ldots, T_{n-1},X_1,\ldots, X_n$ and $\sA_n$ for the subalgebra generated by $T_1,\ldots, T_{n-1}, Y_1\ldots, Y_n.$ Further, we write $\sH_n$ for the finite Hecke algebra which is the subalgebra of $\sD_n^{+}$ generated by $T_1,\ldots, T_{n-1}.$

We define the special elements $\pi:= q^{-n}Y_1T_1\cdots T_{n-1}$ and $\widetilde{\pi}:= X_1T_1^{-1}\cdots T_{n-1}^{-1}.$ Further, we define for $0\leq k \leq n$ the element $\epsilon_k$ by 
$$\epsilon_k: = \frac{1}{[n-k]_q!}\sum_{\sigma \in \mathfrak{S}_{(1^k,n-k)}} q^{{n -k \choose 2} -\ell(\sigma)}T_{\sigma}$$
where $T_{\sigma}:= T_{i_1}\cdots T_{i_r}$ whenever $\sigma = s_{i_1}\cdots s_{i_r}$ is a reduced expression for $\sigma$, $\mathfrak{S}_{(1^k,n-k)}$ is the Young subgroup of the symmetric group $\mathfrak{S}_{n}$ corresponding to the composition $(1^k,n-k)$, and $[m]_q!:= \prod_{i=1}^{m}\left(\frac{1-q^i}{1-q}\right).$ 

We will consider $\sD_n^{+}$ as a graded algebra with
\begin{itemize}
    \item $\deg(T_i) = \deg(Y_i) = 0$
    \item $\deg(X_i) = 1.$
\end{itemize}

\end{defn}

\begin{remark}\label{additional relations remark}
    It is straightforward to check the following additional relations which are all standard in DAHA theory. We will require all of these relations later in this paper. For the element $\pi$ we have:
    \begin{itemize}
        \item $\pi X_i = X_{i+1}\pi$ for $1\leq i \leq n-1$
        \item $\pi T_i = T_{i+1}\pi$ for $1\leq i \leq n-1$
        \item $\pi^{2}T_{n-1} = T_1\pi^{2}.$
    \end{itemize}

    The elements $\epsilon_k$ are the \textit{partial trivial} idempotents. They satisfy the relations:
    \begin{itemize}
        \item $\epsilon_k^{2} = \epsilon_k$
        \item $\epsilon_k T_i = T_i \epsilon_k = \epsilon_k$ for $k+1 \leq i \leq n-1$
        \item $T_i\epsilon_k = \epsilon_kT_i$ for $1 \leq i \leq k-1$
        \item $\epsilon_k = \frac{1}{[n-k]_q!} \sum_{\sigma \in \mathfrak{S}_{(1^k,n-k)}} q^{\ell(\sigma)}T_{\sigma}^{-1}$
        \item $\epsilon_k = \left(\frac{1+qT_{k+1}^{-1}+\ldots + q^{n-k-1}T_{n-1}^{-1}\cdots T_{k+1}^{-1} }{1+q+\ldots + q^{n-k-1}} \right)\epsilon_{k+1} $
        \item $\epsilon_k \epsilon_{\ell} = \epsilon_{\min(k,\ell)}.$
    \end{itemize}
    We have that the important element $\widetilde{\pi}:= X_1T_1^{-1}\cdots T_{n-1}^{-1}$ satisfies the relations:
    \begin{itemize}
        \item $\widetilde{\pi}Y_i = Y_{i+1}\widetilde{\pi}$ for $1\leq i \leq n-1$
        \item $\widetilde{\pi}tY_n = Y_{1}\widetilde{\pi}$
        \item $\widetilde{\pi}T_i = T_{i+1}\widetilde{\pi}$ for $1 
        \leq i \leq n-2$
        \item $\widetilde{\pi}^2T_{n-1} = T_{1}\widetilde{\pi}^2.$
    \end{itemize}

    Note that the last two of these relations only depend of the structure of the subalgebra $\sA_n^{X}$ of $\sD_n^{+}$ and thus hold more generally for all $ 2 \leq k \leq n$:
    \begin{itemize}
        \item $(X_1T_1^{-1}\cdots T_{k-1}^{-1})T_i = T_{i+1}(X_1T_1^{-1}\cdots T_{k-1}^{-1})$ for $1 
        \leq i \leq k-2$
        \item $(X_1T_1^{-1}\cdots T_{k-1}^{-1})^2T_{k-1} = T_{1}(X_1T_1^{-1}\cdots T_{k-1}^{-1})^2.$
    \end{itemize}

    Lastly, we have the following expansion of the Jucys-Murphy elements of the finite Hecke algebra into the standard $T_{\sigma}$ basis:
    $$q^{n-k}T_k^{-1}\cdots T_{n-1}^{-1}T_{n-1}^{-1}\cdots T_k^{-1} = 1 +(q-1)(T_k^{-1}+ qT_{k+1}^{-1}T_{k}^{-1}+ \ldots + q^{n-k-1}T_{n-1}^{-1}\cdots T_k^{-1}).$$
\end{remark}

\begin{remark}
    When considering $\sD_n^{+}$ and $\sD_m^{+}$ for $n \neq m$ we will write $Y_i^{(n)}, \pi^{(n)}, \epsilon_k^{(n)}$ to help distinguish between the distinct copies of $Y_i, \pi, \epsilon_k$ in $\sD_n^{+}$ and $\sD_m^{+}.$ However, since $\sA_{n}^{X}$ naturally includes into $\sA_{m}^{X}$ for any $n \leq m$ we will identify the copies of $X_i$ and $T_i$ in $\sA_{n}^{X}$ and $\sA_{m}^{X}.$
\end{remark}

\begin{defn}\cite{C_2001}\label{polynomial rep of daha}
    The polynomial representation $V_{\pol}^{(n)}$ of $\sD_n^{+}$ consists of the space $\mathbb{Q}(q,t)[x_1,\ldots, x_n]$ along with the following operators:
    \begin{itemize}
        \item $T_i(f) = s_i(f) + (q-1)x_i\frac{f-s_i(f)}{x_i-x_{i+1}}$
        \item $\pi(f(x_1,\ldots,x_n)) = f(x_2,\ldots, x_n, tx_1)$
        \item $X_i(f) = x_if.$
    \end{itemize}
\end{defn}

It is a standard exercise in DAHA theory to verify that $V_{\pol}^{(n)}$ is a $\sD_n^{+}$ module as defined above (see \cite{C_2001}).

\subsection{The $\mathbb{B}_{q,t}$ Algebra}

\begin{defn}\label{B_qt relations}\cite{GCM_2017}
    The algebra $\mathbb{B}_{q,t}$ is generated by a collection of orthogonal idempotents labelled by $\mathbb{Z}_{\geq0}$, generators $d_{+}$, $d_{-}$, $T_i$, and $z_i$ modulo relations:
\begin{multicols}{2}
     \item $(T_i-1)(T_i+q) = 0$
    \item $T_iT_{i+1}T_i = T_{i+1}T_i T_{i+1}$
    \item $T_iT_j = T_j T_i$ if $|i-j|> 1$
    \item $T_i^{-1}z_{i+1}T_{i}^{-1} = q^{-1}z_i$ for $1 \leq i \leq k-1$
    \item $z_iT_j = T_j z_i$ if $i \notin \{j,j+1\}$
    \item $z_iz_j = z_jz_i$ for $1\leq i,j \leq k$
    \item $d_{-}^{2}T_{k-1} = d_{-}^{2}$ for $ k \geq 2$
    \item $d_{-}T_i = T_i d_{-}$ for $1\leq i \leq k-2$
    \item $T_1d_{+}^{2}= d_{+}^{2}$
    \item $d_{+}T_i = T_{i+1} d_{+}$ for $1 \leq i \leq k-1$
    \item $q\varphi d_{-} = d_{-} \varphi T_{k-1}$ for $k \geq 2$
    \item $T_1 \varphi d_{+} = q d_{+} \varphi$ for $k \geq 1$
    \item $z_id_{-} = d_{-}z_i$
    \item $d_{+}z_i = z_{i+1}d_{+}$
    \item $z_1(qd_{+}d_{-} - d_{-}d_{+}) = qt(d_{+}d_{-} - d_{-}d_{+})z_k$ for $ k \geq 1$
\end{multicols}
where $\varphi:= \frac{1}{q-1}[d_{+},d_{-}].$

We will consider $\mathbb{B}_{q,t}$ as a graded algebra with grading determined by 
\begin{itemize}
    \item $\deg(T_i) = \deg(z_i) = \deg(d_{-}) = 0$
    \item $\deg(d_{+}) = 1.$
\end{itemize}

For $n \geq 0$ define $\mathbb{B}_{q,t}^{(n)}$ to be the subalgebra of $\mathbb{B}_{q,t}$ given by only considering $T_i,z_i,d_{-},d_{+}$ between the idempotents labelled by $\{0,\ldots, n\}.$ 
\end{defn}

\begin{remark}
    The graded algebras $\mathbb{B}_{q,t}^{(n)}$ naturally form a directed system with 
    $$\bB = \lim_{\rightarrow} \mathbb{B}_{q,t}^{(n)}.$$
\end{remark}

\begin{defn}\cite{GCM_2017}
    Let $V^{\pol}_{\bullet} = \bigoplus_{k \geq 0} V^{\pol}_{k} := \bigoplus_{k \geq 0} \mathbb{Q}(q,t)[y_1,\ldots, y_k] \otimes \Lambda$ where $\Lambda = \Lambda[X]$ is the ring of \textbf{\textit{symmetric functions}} (over $\mathbb{Q}(q,t)$) in a variable set $X = x_1+ x_2+\ldots.$ Define an action on $V^{\pol}_{\bullet}$ by the following operators given for $F \in V^{\pol}_{k}$ by
\begin{itemize}
    \item $T_iF:= \frac{(q-1)y_{i}F + (y_{i+1}-qy_i)s_i(F)}{y_{i+1}-y_i}$ for $1 \leq i \leq k-1$
    \item $d_{+}F := T_1\cdots T_k F[X+(q-1)y_{k+1}]$
    \item $d_{-}F := -F[X-(q-1)y_k]\Exp[-y_k^{-1}X]|_{y_k^{-1}}$
    \item $z_kF := T_{k-1}\cdots T_1 F[X +(q-1)ty_1 - (q-1)u, y_2,\ldots, y_k, u] \Exp[u^{-1}ty_1 - u^{-1}X]|_{u^{0}}$
    \item $z_{i}:= q^{-1}T_i^{-1}z_{i+1}T_i^{-1}$ for $1 \leq i \leq k-1$
\end{itemize}
where $\Exp[X]:= \sum_{n \geq 0} h_n[X]$ is the \textit{plethystic exponential}, $|_{y_k^{-1}}$ represents taking the coefficient of $y_k^{-1},$ and $s_i$ swaps the variables $y_i,y_{i+1}$. Here we are using \textit{plethystic} notation. This representation $V^{\pol}_{\bullet}$ of $\mathbb{B}_{q,t}$ is called the \textbf{\textit{polynomial representation}}.

\begin{remark}
    Carlsson-Gorsky-Mellit also construct an action of $\mathbb{B}_{q,t}$ on the larger space $W^{\pol}_{\bullet}:= \bigoplus_{k=0}^{\infty} (y_1\cdots y_{k})^{-1}V^{\pol}_{k}.$ The space $V^{\pol}_{\bullet}$ is isomorphic to the equivariant $K$-theory of the parabolic flag Hilbert schemes of points in $\mathbb{C}^2$ and the larger space $W^{\pol}_{\bullet}$ is defined in order to relate the original $\mathbb{A}_{q,t}$ polynomial representation as defined by Carlsson-Mellit \cite{CM_2015} to the $\mathbb{A}_{q,t}$ polynomial representation constructed in \cite{GCM_2017}. We will use the space $W^{\pol}_{\bullet}$ briefly to relate the $\mathbb{B}_{q,t}$ action on $V^{\pol}_{\bullet}$ to the work of Ion-Wu.
\end{remark}
    
\end{defn}

\section{Main Construction}
\subsection{$\mathbb{B}_{q,t}^{(n)}$ Modules From $\sD_n^{+}$}

In this section we will take any graded $\sD_n^{+}$ module $V$ and construct a corresponding graded $\mathbb{B}_{q,t}^{(n)}$ module $L_{\bullet}(V).$ To do this we will first define the spaces which constitute $L_{\bullet}(V).$

\begin{defn}
   For any graded $\sD_n^{+}$ module $V$ and $0 \leq k \leq n$ define the space $L_k = L_k(V)$ as $L_k:= X_1\cdots X_k \epsilon_k(V).$ We let $L_{\bullet} = L_{\bullet}(V)$ denote the space $L_{\bullet}:= \bigoplus_{0 \leq k \leq n} L_k.$
\end{defn}

For the remainder of this section let $V$ be a graded module for $\sD_n^{+}.$ We are going to now construct operators $T_i,z_i,d_{+},d_{-}$ on $L_{\bullet}$ which we will show generate a representation of $\mathbb{B}_{q,t}^{(n)}.$ 

\begin{defn}\label{operators construction}
    Define the operators
    \begin{itemize}
        \item $T_i: L_{k} \rightarrow L_{k}$ for $1 \leq i \leq k-1$
        \item $z_i: L_{k} \rightarrow L_{k}$ for $1 \leq i \leq k$
        \item $d_{+}:L_{k} \rightarrow L_{k+1}$ for $0 \leq k \leq n-1$
        \item $d_{-}:L_{k} \rightarrow L_{k-1}$ for $1\leq k \leq n$
    \end{itemize}
as follows:

\begin{itemize}
    \item $T_i(v)$ is defined by the action of $T_i$ on $V$
    \item $z_i(v):= (qt)^{-1}Y_i(v)$ as defined by the action of $Y_i$ on $V$
    \item $d_{+}(v):= q^{k}X_1T_1^{-1}\cdots T_k^{-1}v$
    \item $d_{-}(v):=  (q-1)(1+qT_{k}^{-1}+\ldots + q^{n-k}T_{n-1}^{-1}\cdots T_{k}^{-1})(v).$
\end{itemize}
\end{defn}

It is not immediately obvious that these operators are well defined i.e. that their ranges are correctly specified above. We show this now.

\begin{lem}\label{well-definedness of operators}
    If $v\in L_k$ then $T_i(v), z_j(v) \in L_{k}$ for all $1 \leq i\leq k-1$ and $1\leq j\leq k.$ If $ k \leq n-1$ then $d_{+}(v) \in L_{k+1}$ and if $1\leq k$ then $d_{-}(v) \in L_{k-1}.$ 
\end{lem}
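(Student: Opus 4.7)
The plan is to handle each of the four operator types separately, reducing each well-definedness claim to identities inside $\sD_n^{+}$ that follow from Definition 2.4 and Remark 2.5. Four basic commutations, all immediate from those relations, will be used throughout: $T_i$ commutes with $X_1\cdots X_k$ for $1\le i\le k-1$ (from $T_i(X_iX_{i+1})=(X_iX_{i+1})T_i$, derived in turn from $T_i^{-1}X_iT_i^{-1}=q^{-1}X_{i+1}$); $X_j\epsilon_k=\epsilon_kX_j$ for $j\le k$ (since $X_j$ commutes with every $T_i$, $i\ge k+1$); $T_i\epsilon_k=\epsilon_kT_i$ for $i\le k-1$; and $Y_j\epsilon_k=\epsilon_kY_j$ for $j\le k$ (from $T_iY_j=Y_jT_i$ whenever $i\ge k+1$ and $j\le k$). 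Given these, the case of $T_i$ is immediate: $T_i(X_1\cdots X_k\epsilon_kw)=X_1\cdots X_k\epsilon_k(T_iw)\in L_k$.

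For $d_{+}$ I would iterate $T_i^{-1}X_i=q^{-1}X_{i+1}T_i$ to obtain $T_1^{-1}\cdots T_k^{-1}X_1\cdots X_k=q^{-k}X_2\cdots X_{k+1}T_1\cdots T_k$, so that $d_{+}(v)=X_1\cdots X_{k+1}(T_1\cdots T_k\epsilon_kw)$; combining $\epsilon_{k+1}\epsilon_k=\epsilon_k$ with the commutation of $\epsilon_{k+1}$ past $T_1,\ldots,T_k$ then places this in $X_1\cdots X_{k+1}\epsilon_{k+1}V=L_{k+1}$. For $d_{-}$, each $T_j^{-1}$ with $j\ge k$ commutes past $X_1,\ldots,X_{k-1}$, so I can factor
\[
d_{-}(v)=(q-1)\,X_1\cdots X_{k-1}\,A_k\,X_k\epsilon_kw,\qquad A_k:=1+qT_k^{-1}+\cdots+q^{n-k}T_{n-1}^{-1}\cdots T_k^{-1}.
\]
Swapping $X_k\epsilon_k=\epsilon_kX_k$ and applying the Remark 2.5 identity $A_k\epsilon_k=[n-k+1]_q\,\epsilon_{k-1}$ (obtained by shifting $k\mapsto k-1$ in the formula expressing $\epsilon_{k-1}$ as a sum of $T^{-1}$-words times $\epsilon_k$) places the result in $X_1\cdots X_{k-1}\epsilon_{k-1}V=L_{k-1}$.

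For $z_i$, the DAHA identity $Y_j=q^{-(j-1)}T_{j-1}\cdots T_1Y_1T_1\cdots T_{j-1}$ combined with the already-proved fact that $T_i$ preserves $L_k$ reduces the claim to $Y_1(L_k)\subset L_k$. I would then write $Y_1=q^n\pi T_{n-1}^{-1}\cdots T_1^{-1}$ and, by another iteration of $T_i^{-1}X_i=q^{-1}X_{i+1}T_i$, compute
\[
T_{n-1}^{-1}\cdots T_1^{-1}X_1\cdots X_k \;=\; q^{-(n-k)}X_1\cdots X_{k-1}X_n\cdot T_{n-1}\cdots T_kT_{k-1}^{-1}\cdots T_1^{-1}.
\]
Applying $\pi$ via $\pi X_j=X_{j+1}\pi$ (for $j\le n-1$) together with $\pi X_n=tX_1\pi$ (a derived consequence of $Y_1X_1\cdots X_n=tX_1\cdots X_nY_1$ using that $T_1\cdots T_{n-1}$ commutes with $X_1\cdots X_n$) produces the master identity
\[
Y_1X_1\cdots X_k\epsilon_kw \;=\; q^kt\,X_1\cdots X_k\cdot \pi T_{n-1}\cdots T_k\epsilon_k(T_{k-1}^{-1}\cdots T_1^{-1}w).
\]

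The main obstacle is then to verify that $\pi T_{n-1}\cdots T_k\epsilon_k$ sends $V$ into $\epsilon_kV$, equivalently that each $T_i$ with $k+1\le i\le n-1$ fixes the output. For this I would combine $T_i\pi=\pi T_{i-1}$ (a rewriting of $\pi T_{i-1}=T_i\pi$) with the Hecke-algebra identity
\[
T_{i-1}(T_{n-1}T_{n-2}\cdots T_k) \;=\; (T_{n-1}T_{n-2}\cdots T_k)\,T_i \qquad (k+1\le i\le n-1),
\]
which I would verify by checking the corresponding relation $s_{i-1}(s_{n-1}\cdots s_k)=(s_{n-1}\cdots s_k)s_i$ in $\mathfrak{S}_n$ and noting that both sides have length $n-k+1$, so the identity lifts unambiguously to $\sH_n$. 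Since $T_i\epsilon_k=\epsilon_k$ for $i\ge k+1$, this yields $T_i\pi T_{n-1}\cdots T_k\epsilon_kw' = \pi T_{n-1}\cdots T_kT_i\epsilon_kw' = \pi T_{n-1}\cdots T_k\epsilon_kw'$, completing the verification that $Y_1(L_k)\subset L_k$ and hence that every $z_i$ preserves $L_k$.
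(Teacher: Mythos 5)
Your argument is correct and essentially reproduces the paper's own proof: the same observation for $T_i$, the same rewriting $q^kX_1T_1^{-1}\cdots T_k^{-1}(X_1\cdots X_k\epsilon_k w)=X_1\cdots X_{k+1}\epsilon_{k+1}(T_1\cdots T_k\epsilon_k w)$ for $d_{+}$, the same use of the Remark-2.5 identity expressing $\epsilon_{k-1}$ through $\epsilon_k$ for $d_{-}$, and the same reduction of $z_j$ to $Y_1$ via $Y_1=q^n\pi T_{n-1}^{-1}\cdots T_1^{-1}$ followed by the commutation $T_{i-1}T_{n-1}\cdots T_k=T_{n-1}\cdots T_kT_i$ (together with $\pi X_n=tX_1\pi$, which the paper also uses). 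The only cosmetic differences are your reduced-word/length verification of that Hecke identity where the paper uses explicit braid moves, and an index shift in the $d_{-}$ computation.
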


\begin{proof}
    Let $v \in L_k$ say, $v = X_1\cdots X_k\epsilon_k(w)$ for $w \in V.$
    First we have, 
    \begin{align*}
        T_i(v) &= T_iX_1\cdots X_k\epsilon_k(w)\\
        &= X_1\cdots X_k\epsilon_k(T_iw) \in L_k.\\
    \end{align*}
    Next we have 
    \begin{align*}
        z_1(v) &= (qt)^{-1}Y_1X_1\cdots X_k\epsilon_k(w) \\
        &= (qt)^{-1}q^{n}\pi T_{n-1}^{-1}\cdots T_{1}^{-1}X_1\cdots X_k\epsilon_k(w)\\
        &= (qt)^{-1}q^n \pi T_{n-1}^{-1}\cdots T_{k}^{-1}X_1\cdots X_k T_{k-1}^{-1}\cdots T_{1}^{-1} \epsilon_k(w)\\
        &= (qt)^{-1}q^n \pi X_1 \cdots X_{k-1} T_{n-1}^{-1}\cdots T_k^{-1} X_k T_{k-1}^{-1}\cdots T_1^{-1} \epsilon_k(w)\\
        &= (qt)^{-1}q^n X_2 \cdots X_k \pi T_{n-1}^{-1}\cdots T_k^{-1} X_k T_{k-1}^{-1}\cdots T_1^{-1}\epsilon_k(w)\\
        &= (qt)^{-1}q^n X_2\cdots X_k tq^{-(n-k)}X_1\pi T_{n-1}\cdots T_kT_{k-1}^{-1}\cdots T_1^{-1}\epsilon_k(w)\\
        &= (qt)^{-1}tq^kX_1\cdots X_k \pi T_{n-1}\cdots T_kT_{k-1}^{-1}\cdots T_1^{-1}\epsilon_k(w)\\
        &= (qt)^{-1}X_1\cdots X_k \pi T_{n-1}\cdots T_k \epsilon_k(tq^kT_{k-1}^{-1}\cdots T_1^{-1}w).\\
    \end{align*}

    Now for all $k < i \leq n-1$ we have 
    \begin{align*}
        T_i\pi T_{n-1}\cdots T_k &= \pi T_{i-1}T_{n-1}\cdots T_k \\
        &= \pi T_{n-1}\cdots T_{i+1} T_{i-1}T_iT_{i-1} T_{i-2} \cdots T_k\\
        &= \pi T_{n-1}\cdots T_{i+1} T_{i}T_{i-1}T_{i} T_{i-2} \cdots T_k\\
        &= \pi T_{n-1}\cdots T_k T_i.\\
    \end{align*}
    Therefore, we have 
    \begin{align*}
        &X_1\cdots X_k \pi T_{n-1}\cdots T_k \epsilon_k(tq^kT_{k-1}^{-1}\cdots T_1^{-1}w)\\
        &= X_1\cdots X_k \epsilon_k(tq^k \pi T_{n-1}\cdots T_kT_{k-1}^{-1}\cdots T_1^{-1}w)\\
    \end{align*}
    which is clearly in $L_k.$ 

    Now for any $1< i \leq k$ since 
    $Y_i = q^{-1}T_{i-1}Y_{i-1}T_{i-1}$ we see that 
    $$Y_i = q^{-i+1}T_{i-1}\cdots T_1Y_1T_1\cdots T_{i-1}$$ and so 
    $$z_i = (qt)^{-1}q^{-i+1}T_{i-1}\cdots T_1Y_1T_1\cdots T_{i-1}.$$ Since $T_1\cdots T_{i-1}v \in L_k$ we see that $Y_1(T_1\cdots T_{i-1}v) \in L_k$ as well and so
    $$z_i(v) = (qt)^{-1}q^{-i+1}T_{i-1}\cdots T_1Y_1T_1\cdots T_{i-1}(v) = (qt)^{-1}q^{-i+1}T_{i-1}\cdots T_1Y_1(T_1\cdots T_{i-1}v) \in L_k.$$

    We now look at $d_{+}.$ We find that
    \begin{align*}
        &d_{+}(v) = \\
        &= q^{k}X_1T_1^{-1}\cdots T_k^{-1}(v)\\
        &= q^{k}X_1T_1^{-1}\cdots T_k^{-1}(X_1\cdots X_k\epsilon_k(w))\\  
        &= T_1\cdots T_kX_{k+1}(X_1\cdots X_k\epsilon_k(w))\\
        &= T_1\cdots T_k X_1\cdots X_{k+1} \epsilon_k(w) \\
        &=  X_1\cdots X_{k+1} (T_1\cdots T_k \epsilon_k(w)) \\
        &=  X_1\cdots X_{k+1} (T_1\cdots T_k \epsilon_{k+1}\epsilon_k(w)) \\
        &= X_1\cdots X_{k+1}\epsilon_{k+1} (T_1\cdots T_k \epsilon_k(w)) \in L_{k+1}.\\
    \end{align*}

    Lastly, we look at $d_{-}.$ We suppose $v \in L_{k+1}$ say, $v = X_1\cdots X_{k+1}\epsilon_{k+1}(w)$ for $w \in V.$ We get that
    \begin{align*}
        &d_{-}(v) \\
        &= (q-1)(1+qT_{k+1}^{-1}+\ldots + q^{n-k-1}T_{n-1}^{-1}\cdots T_{k+1}^{-1})(v)\\
        &= (q-1)(1+qT_{k+1}^{-1}+\ldots + q^{n-k-1}T_{n-1}^{-1}\cdots T_{k+1}^{-1})(X_1\cdots X_{k+1}\epsilon_{k+1}(w))\\
        &= (q-1)(1+qT_{k+1}^{-1}+\ldots + q^{n-k-1}T_{n-1}^{-1}\cdots T_{k+1}^{-1})(\epsilon_{k+1}(X_1\cdots X_{k+1}w))\\
        &= (q-1)(1+q+\ldots + q^{n-k-1})\epsilon_{k}(X_1\cdots X_{k+1}w) \\
        &= X_1\cdots X_k\epsilon_{k}((q^{n-k}-1)X_{k+1}w) \in L_{k}.\\
    \end{align*}
\end{proof}

Now we will show that the collection of operators $T_i,z_j,d_{-},d_{+}$ acting on the space $L_{\bullet}$ generates an action of $\mathbb{B}_{q,t}.$

\begin{prop}\label{finite rank main construction}
    $L_{\bullet}$ is a $\mathbb{B}_{q,t}^{(n)}$-module.
\end{prop}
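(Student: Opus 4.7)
The plan is to verify each of the fifteen defining relations of $\mathbb{B}_{q,t}^{(n)}$ in Definition~\ref{B_qt relations} on the operators of Definition~\ref{operators construction}, using the DAHA relations of $\sD_n^{+}$, the identities collected in Remark~\ref{additional relations remark}, and a key preliminary \emph{absorption identity}: on $L_k$ one has $T_j v = v$ for every $k+1 \leq j \leq n-1$, since $T_j$ commutes with $X_1\cdots X_k$ (from the DAHA relation $T_j X_i = X_i T_j$ when $i \notin \{j, j+1\}$) and $T_j \epsilon_k = \epsilon_k$ (from Remark~\ref{additional relations remark}). Well-definedness has already been handled in Lemma~\ref{well-definedness of operators}; what remains splits into four groups.

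The relations among only $T_i$ and $z_i$ (items~1--6) are immediate from the DAHA relations on $T_i$ and $Y_i$, since $T_i$ acts as the DAHA generator and $z_i = (qt)^{-1} Y_i$. For the $d_+$ relations (items~9, 10), the critical observation is the identification $d_+ = q^k \widetilde{\pi}^{(k+1)}$ on $L_k$, where $\widetilde{\pi}^{(k+1)} := X_1 T_1^{-1}\cdots T_k^{-1}$; using absorption, one in fact has $d_+ v = q^k \widetilde{\pi}^{(\ell)} v$ for any $k+1 \leq \ell \leq n$. Item~10 follows directly from the Remark identity $\widetilde{\pi}^{(k+1)} T_i = T_{i+1} \widetilde{\pi}^{(k+1)}$. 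For item~9, I would first write $d_+^2 v = q^{2k+1} (\widetilde{\pi}^{(k+2)})^2 v$ (using absorption to identify $\widetilde{\pi}^{(k+1)} v$ with $\widetilde{\pi}^{(k+2)} v$), and then invoke $(\widetilde{\pi}^{(k+2)})^2 T_{k+1} = T_1 (\widetilde{\pi}^{(k+2)})^2$ together with $T_{k+1} v = v$ to conclude $T_1 d_+^2 v = d_+^2 v$.

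The $d_-$ relations (items~7, 8) and the mixed relations (items~11--14) rely on the explicit formula extracted from the proof of Lemma~\ref{well-definedness of operators}: $d_-\bigl(X_1\cdots X_{k+1} \epsilon_{k+1}(w)\bigr) = (q^{n-k}-1) X_1\cdots X_k \epsilon_k(X_{k+1} w)$. Item~8 is immediate, as the $T_j^{-1}$ in $d_-$ all have $j \geq k$ and so commute with $T_i$ for $i \leq k-2$. For item~7, I would iterate the formula to obtain $d_-^2 v \propto X_1\cdots X_{k-2} \epsilon_{k-2}(X_{k-1} X_k w)$, verify by a short DAHA calculation (using $T_i X_i = X_{i+1} T_i - (q-1) X_i$ and $T_i X_{i+1} = q X_i T_i^{-1}$) that $T_{k-1}$ commutes with $X_{k-1} X_k$, and then apply $\epsilon_{k-2} T_{k-1} = \epsilon_{k-2}$ from the Remark to absorb the excess $T_{k-1}$ into the idempotent. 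For item~14 ($d_+ z_i = z_{i+1} d_+$), use $d_+ v = q^k \widetilde{\pi}^{(n)} v$ on $L_k$ together with the Remark's $\widetilde{\pi} Y_i = Y_{i+1} \widetilde{\pi}$; for item~13 ($z_i d_- = d_- z_i$ with $i \leq k-1$), use that $Y_i$ commutes with every $T_j$ for $j \geq k > i$. The two $\varphi$-relations (items~11, 12) then follow by direct algebraic manipulation from the preceding relations, since $\varphi = (q-1)^{-1} [d_+, d_-]$.

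The hard part --- the main obstacle --- will be item~15:
\[
z_1 (q d_+ d_- - d_- d_+) = q t (d_+ d_- - d_- d_+) z_k.
\]
This relation binds the shift operators $X_i$, the Cherednik operators $Y_i$, and the affine twist all at once. My expectation is that its verification crucially invokes the affine DAHA relation $Y_1 X_1 \cdots X_n = t X_1 \cdots X_n Y_1$: after expanding $q d_+ d_- - d_- d_+$ on $L_k$ using the explicit formulas for $d_\pm$ and the $\widetilde{\pi}$-identifications, one should recognize a factor of the form $X_1 \cdots X_n$ through which $Y_1$ can be commuted at the cost of a factor of $t$. The asymmetry in indices (from $z_1$ on the left to $z_k$ on the right) should then be accounted for by the conjugation relating $Y_1$ to $Y_k$ via $\pi$ (together with the telescoping of the idempotents). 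I anticipate this step to be substantially more delicate than the previous ones.
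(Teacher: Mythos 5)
Your outline for the routine relations is essentially the paper's own route: the absorption identity $T_jv=v$ on $L_k$ for $j\ge k+1$, the resulting identification $d_+v=q^kX_1T_1^{-1}\cdots T_{\ell-1}^{-1}v$ for any admissible $\ell$, the two $\sA_n^{X}$-identities of Remark \ref{additional relations remark} for relations 9 and 10, the closed formula $d_-\bigl(X_1\cdots X_{k+1}\epsilon_{k+1}(w)\bigr)=(q^{n-k}-1)X_1\cdots X_k\epsilon_k(X_{k+1}w)$ for the $d_-$ relations, and the commutations giving $z_id_-=d_-z_i$ and $d_+z_i=z_{i+1}d_+$ are all correct and match the paper. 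However, the proposal has a genuine gap exactly where the real work lies. Relations 11 and 12 do not ``follow by direct algebraic manipulation from the preceding relations'': they are imposed as independent relations in Definition \ref{B_qt relations}, and nothing you have established produces them formally. What is actually needed is the explicit computation of $\varphi$ on $L_k$: expanding $(q-1)\varphi=d_+d_--d_-d_+$ with your formulas and using the telescoping cancellation
$(1+qT_{k}^{-1}+\cdots+q^{n-k}T_{n-1}^{-1}\cdots T_{k}^{-1})-q(1+qT_{k+1}^{-1}+\cdots+q^{n-k-1}T_{n-1}^{-1}\cdots T_{k+1}^{-1})T_k^{-1}=1$
gives $\varphi(v)=q^{k-1}X_1T_1^{-1}\cdots T_{k-1}^{-1}(v)$ on $L_k$. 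Only with this formula in hand do relations 11, 12 --- and 15 --- become checkable; your sketch never derives it.

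For relation 15 your proposed mechanism would not go through as stated. On $L_k$ with $k<n$ there is no factor $X_1\cdots X_n$ present, so the relation $Y_1X_1\cdots X_n=tX_1\cdots X_nY_1$ cannot be applied directly, and the passage from the index $1$ on the left to the index $k$ on the right is not a conjugation by $\pi$. The actual argument first collapses $z_1(qd_+d_--d_-d_+)(v)$ using the Jucys--Murphy expansion from Remark \ref{additional relations remark},
$q^{n-k}T_k^{-1}\cdots T_{n-1}^{-1}T_{n-1}^{-1}\cdots T_k^{-1}=1+(q-1)(T_k^{-1}+qT_{k+1}^{-1}T_k^{-1}+\cdots+q^{n-k-1}T_{n-1}^{-1}\cdots T_k^{-1}),$
which turns the difference of the two geometric sums into a single product and lets one recognize $X_1T_1^{-1}\cdots T_{n-1}^{-1}=\widetilde{\pi}$; the factor $t$ then enters through $Y_1\widetilde{\pi}=t\widetilde{\pi}Y_n$, and finally $Y_n$ is brought down via $T_k^{-1}\cdots T_{n-1}^{-1}Y_nT_{n-1}^{-1}\cdots T_k^{-1}=q^{-(n-k)}Y_k$, yielding $qt(q-1)\varphi z_k=qt[d_+,d_-]z_k$. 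Your sketch correctly anticipates that the affine $t$-twist and a transfer from $Y_1$ to $Y_k$ must appear, but without the formula for $\varphi$, the Jucys--Murphy collapse, and the $\widetilde{\pi}$-relations used in this precise way, relation 15 (and hence the proposition) is not yet proved.
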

\begin{proof}
    We will show that the operators $T_i,z_j,d_{-},d_{+}$ on $L_{\bullet}$ defined in Definition \ref{operators construction} satisfy the relations in Definition \ref{B_qt relations}. Note first that the relations involving only $T_i$'s and $z_j$'s follow immediately from their definition and the fact that $V$ is a $\sD^{+}_n$-module. 

    We will start by verifying the relations between $d_{+}$ and the $T_i.$ We will for the remainder of this proof let $v \in L_k$ and specify various conditions on $k$ as needed. Suppose $0\leq k \leq n-1.$ Then for $1 \leq i \leq k-1$ using the braid relations we see
    \begin{align*}
        & d_{+}T_i(v) \\
        &= q^{k}X_1T_1^{-1}\cdots T_k^{-1}T_i(v) \\
        &= q^{k}X_1T_{i+1}T_1^{-1}\cdots T_k^{-1}(v)\\
        &= T_{i+1}(q^{k}X_1T_1^{-1}\cdots T_k^{-1})(v)\\
        &= T_{i+1}d_{+}(v).\\
    \end{align*}

    Now if $0\leq k \leq n-2$ we see from the braid relations and the fact that $T_{k+1}(v) = v$ 
    \begin{align*}
        &T_1^{-1}d_{+}^{2}(v) \\
        &= T_1^{-1} d_{+}(q^{k}X_1T_1^{-1}\cdots T_k^{-1}(v))\\
        &= T_1^{-1} (q^{k+1}X_1T_1^{-1}\cdots T_{k+1}^{-1})(q^{k}X_1T_1^{-1}\cdots T_k^{-1}(v))\\
        &= q^{2k+1}T_1^{-1}X_1T_{1}^{-1}T_2^{-1}\cdots T_{k+1}^{-1}X_1T_1^{-1}\cdots T_k^{-1}(v)\\
        &= q^{2k}X_2T_2^{-1}\cdots T_{k+1}^{-1} X_1T_1^{-1}\cdots T_k^{-1}(v)\\
        &= q^{2k}X_1X_2T_2^{-1}\cdots T_{k+1}^{-1}T_1^{-1}\cdots T_k^{-1}(v)\\
        &= q^{2k}X_1X_2T_2^{-1}\cdots T_{k+1}^{-1}T_1^{-1}\cdots T_k^{-1}T_{k+1}(v)\\
        &= q^{2k}X_1X_2T_1T_2^{-1}\cdots T_{k+1}^{-1}T_1^{-1}\cdots T_k^{-1}(v)\\
        &= q^{2k+1}X_1T_1^{-1}X_1T_2^{-1}\cdots T_{k+1}^{-1}T_1^{-1}\cdots T_k^{-1}(v) \\
        &= q^{2k+1}X_1T_1^{-1}T_2^{-1}\cdots T_{k+1}^{-1}X_1T_1^{-1}\cdots T_k^{-1}(v)\\
        &= (q^{k+1}X_1T_1^{-1}\cdots T_{k+1}^{-1})(q^{k}X_1T_1^{-1}\cdots T_k^{-1}(v))\\
        &= d_{+}^{2}(v) .\\
    \end{align*}

We will now show that $d_{+}z_i = z_{i+1}d_{+}.$ Suppose $1\leq i \leq k \leq n-1.$ Then we have by using Remark \ref{additional relations remark} 
\begin{align*}
    & z_{i+1}d_{+}(v) \\
    &= (qt)^{-1}q^{k}Y_{i+1}X_1T_{1}^{-1}\cdots T_{k}^{-1}(v)\\
    &= (qt)^{-1}q^{k}Y_{i+1}X_1T_{1}^{-1}\cdots T_{k}^{-1}T_{k+1}^{-1}\cdots T_{n-1}^{-1}(v)\\
    &= (qt)^{-1}q^{k}Y_{i+1}(X_1T_{1}^{-1} \cdots T_{n-1}^{-1})(v)\\
    &= (qt)^{-1}q^{k}(X_1T_{1}^{-1}\cdots T_{n-1}^{-1}) Y_{i}(v)\\
    &= (qt)^{-1}q^{k}X_1T_{1}^{-1}\cdots T_{k}^{-1}Y_{i} T_{k+1}^{-1}\cdots T_{n-1}^{-1}(v) \\
    &= q^{k}X_1T_{1}^{-1}\cdots T_{k}^{-1}(qt)^{-1}Y_{i}(v) \\
    &= d_{+}(z_i(v)).\\
\end{align*}

Next we note that the relations between just $d_{-}$ and the $T_i$ follow trivially from the fact that $d_{-}:L_{k+1}\rightarrow L_{k}$ is a scalar multiple of $\epsilon_{k}|_{L_{k+1}}$ which follows from the relations (see Remark \ref{additional relations remark}). Further, the relation $z_id_{-} = d_{-}z_i$ also follows easily from the fact that $Y_iT_j = T_jY_i$ for $i \notin \{j,j+1\}.$

Now we are left to show that the relations involving $\varphi:= \frac{1}{q-1} [d_{+},d_{-}]$ hold. Notice that $\varphi$ may be computed for $1\leq k \leq n-1$ as 
\begin{align*}
    & (q-1)\varphi(v) \\
    &=  [d_{+},d_{-}](v)\\
    &=  (d_{+}d_{-}-d_{-}d_{+})(v)\\
    &=  d_{+}((q-1)(1+qT_{k}^{-1}+\ldots + q^{n-k}T_{n-1}^{-1}\cdots T_{k}^{-1})(v)) - d_{-}(q^{k}X_1T_1^{-1}\cdots T_k^{-1}v)\\
    &= (q-1)(q^{k-1}X_1T_1^{-1}\cdots T_{k-1}^{-1})(1+qT_{k}^{-1}+\ldots + q^{n-k}T_{n-1}^{-1}\cdots T_{k}^{-1})(v)\\
    &~~~-(q-1)(1+qT_{k+1}^{-1}+\ldots + q^{n-k-1}T_{n-1}^{-1}\cdots T_{k+1}^{-1})(q^{k}X_1T_1^{-1}\cdots T_k^{-1}v)\\
    &= (q-1)q^{k-1}X_1T_1^{-1}\cdots T_{k-1}^{-1}\left((1+qT_{k}^{-1}+\ldots + q^{n-k}T_{n-1}^{-1}\cdots T_{k}^{-1}) - q(1+qT_{k+1}^{-1}+\ldots + q^{n-k-1}T_{n-1}^{-1}\cdots T_{k+1}^{-1}   )T_k^{-1}\right)\\
    &= (q-1)q^{k-1}X_1T_1^{-1}\cdots T_{k-1}^{-1} \\
\end{align*}
    so that $$\varphi(v) = q^{k-1}X_1T_1^{-1}\cdots T_{k-1}^{-1}(v).$$

Let $2\leq k \leq n.$ 
Then 
    \begin{align*}
        &q\varphi d_{-}(v) \\
        &= q\varphi (q-1)(1+qT_{k}^{-1}+ \ldots + q^{n-k}T_{n-1}^{-1}\cdots T_{k}^{-1})v \\
        &= q(q-1)q^{k-2}X_1T_1^{-1}\cdots T_{k-2}^{-1}(1+qT_{k}^{-1}+ \ldots + q^{n-k}T_{n-1}^{-1}\cdots T_{k}^{-1})v \\
        &= (q-1)(1+qT_{k}^{-1}+ \ldots + q^{n-k}T_{n-1}^{-1}\cdots T_{k}^{-1}) q^{k-1}X_1T_1^{-1}\cdots T_{k-2}^{-1}(v)\\
        &= d_{-}\varphi T_{k-1}(v).\\
    \end{align*}

 Let us now show that $T_1 \varphi d_{+} = q d_{+} \varphi.$ Suppose $1 \leq k \leq n-2.$  Then 
 \begin{align*}
     &T_1\varphi d_{+}(v) \\
     &T_1 \varphi (q^{k}X_1T_1^{-1}\cdots T_{k}^{-1})(v) \\
     &= T_1(q^{k}X_1T_1^{-1}\cdots T_{k}^{-1})(q^{k}X_1T_1^{-1}\cdots T_{k}^{-1})(v)\\
     &= q^{2k}T_1X_1T_1^{-1}\cdots T_{k}^{-1}X_1T_1^{-1}\cdots T_k^{-1}(v)\\
     &= q^{2k}T_1(XT_1^{-1}\cdots T_{k}^{-1})^2(v) \\
     &= q^{2k}(X_1T_1^{-1}\cdots T_{k}^{-1})^2T_{k}(v) \\
     &= q^{2k}X_1T_1^{-1}\cdots T_{k}^{-1}X_1T_1^{-1}\cdots T_{k-1}^{-1}(v)\\
     &= q(q^{k}X_1T_{1}^{-1}\cdots T_k^{-1})(q^{k-1}X_1T_1^{-1}\cdots T_{k-1}^{-1})(v)\\
     &= qd_{+}\varphi (v).\\
 \end{align*}

Lastly, we show that $z_1(qd_{+}d_{-} - d_{-}d_{+}) = qt(d_{+}d_{-} - d_{-}d_{+})z_k.$ Take $1 \leq k \leq n-1.$ Then we find
\begin{align*}
    &z_1(qd_{+}d_{-} - d_{-}d_{+})(v) \\
    &= z_1(qd_{+}d_{-}(v) - d_{-}d_{+}(v)) \\
    &= z_1\left( qd_{+}(q-1)(1+qT_k^{-1}+\ldots + q^{n-k}T_{n-1}^{-1}\cdots T_k^{-1})(v) - d_{-}(q^{k}X_1T_1^{-1}\cdots T_k^{-1})(v)\right)\\
    &= (qt)^{-1}Y_1q(1-q)(q^{k-1}X_1T_1^{-1}\cdots T_{k-1}^{-1})(1+qT_k^{-1}+\ldots + q^{n-k}T_{n-1}^{-1}\cdots T_k^{-1})(v)\\
    &~~~ -(qt)^{-1}Y_1(q-1)(1+qT_{k+1}^{-1}+\ldots + q^{n-k-1}T_{n-1}^{-1}\cdots T_{k+1}^{-1})(q^{k}X_1T_1^{-1}\cdots T_{k}^{-1})(v)\\
    &= (qt)^{-1}(q-1)q^{k}Y_1X_1T_1^{-1}\cdots T_{k-1}^{-1}\left( 1+qT_{k+1}^{-1}+\ldots + q^{n-k-1}T_{n-1}^{-1}\cdots T_{k+1}^{-1} - (1+qT_{k+1}^{-1}+\ldots + q^{n-k-1}T_{n-1}^{-1}\cdots T_{k+1}^{-1}T_k^{-1})\right)(v)\\
    &= (qt)^{-1}(q-1)q^{k}Y_1X_1T_1^{-1}\cdots T_{k-1}^{-1}\left(1 + (q-1)(T_{k}^{-1}+qT_{k+1}^{-1}T_k^{-1}+\ldots + q^{n-k-1}T_{n-1}^{-1}\cdots T_k^{-1}) \right)(v)\\
    &= (qt)^{-1}(q-1)q^{k}Y_1X_1T_1^{-1}\cdots T_{k-1}^{-1}\left(q^{n-k}T_k^{-1}\cdots T_{n-1}^{-1}T_{n-1}^{-1}\cdots T_{k}^{-1} \right)(v)\\
    &= (qt)^{-1}q^{n}(q-1)Y_1\widetilde{\pi} T_{n-1}^{-1}\cdots T_k^{-1}(v)\\
    &= (qt)^{-1}q^{n}(q-1)\widetilde{\pi} (tY_n)T_{n-1}^{-1}\cdots T_k^{-1}(v)\\
    &= (qt)^{-1}tq^{n}(q-1)X_1T_{1}^{-1}\cdots T_{k-1}^{-1}(T_k^{-1}\cdots T_{n-1}^{-1}Y_nT_{n-1}^{-1}\cdots T_k^{-1})(v)\\
    &= (qt)^{-1}tq^{n}(q-1)X_1T_{1}^{-1}\cdots T_{k-1}^{-1}(q^{-(n-k)}Y_k)(v)\\
    &= (qt)^{-1}tq^{k}(q-1)X_1T_1^{-1}\cdots T_{k-1}^{-1}Y_k(v)\\
    &= (qt)^{-1}qt(q-1) \left(q^{k-1}X_1T_1^{-1}\cdots T_{k-1}^{-1} \right)Y_k(v)\\
    &= qt(q-1)\varphi z_k(v) \\
    &= qt [d_{+},d_{-}] z_k(v).\\
\end{align*}
    
\end{proof}

\begin{cor}\label{finite rank functorial}
    The map $W \rightarrow L_{\bullet}(W)$ is a covariant functor $\sD_{n}^{+}-\MOD \rightarrow \mathbb{B}_{q,t}^{(n)}-\MOD.$
\end{cor}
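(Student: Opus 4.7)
The plan is to define $L_{\bullet}$ on morphisms as componentwise restriction and then observe that essentially everything is forced by the fact that the $\mathbb{B}_{q,t}^{(n)}$-action on $L_{\bullet}(W)$ is manufactured entirely from the ambient $\sD_n^{+}$-action on $W$. Concretely, given a morphism $\phi : W \to W'$ in $\sD_n^{+}\text{-}\MOD$ (that is, a degree-preserving $\sD_n^{+}$-module homomorphism), I would define $L_{\bullet}(\phi) : L_{\bullet}(W) \to L_{\bullet}(W')$ to be the restriction of $\phi$. The first step is to check this restriction lands where claimed: for $v = X_1 \cdots X_k \epsilon_k(w) \in L_k(W)$ we have
$$\phi(v) = \phi(X_1 \cdots X_k \epsilon_k(w)) = X_1 \cdots X_k \epsilon_k(\phi(w)) \in L_k(W'),$$
since $\phi$ commutes with each $X_i$ and with $\epsilon_k \in \sH_n$. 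In particular $L_{\bullet}(\phi)$ respects the $k$-grading on $L_{\bullet}$ as well as the internal degree inherited from $W$, so it is a graded map.

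Next I would verify that $L_{\bullet}(\phi)$ intertwines each of the four generating families of operators $T_i, z_i, d_{+}, d_{-}$ of Definition \ref{operators construction}. By construction every such operator is (the action of) a fixed element of $\sD_n^{+}$, or a fixed $\mathbb{Q}(q,t)$-linear combination thereof: $T_i$ is the DAHA $T_i$, $z_i = (qt)^{-1} Y_i$, $d_{+} = q^k X_1 T_1^{-1} \cdots T_k^{-1}$, and $d_{-}$ is a polynomial in the $T_j^{-1}$. Because $\phi$ is a $\sD_n^{+}$-intertwiner, it commutes with every such element in $W$; restricting this commutation to the invariant subspace $L_{\bullet}(W)$ gives, for instance,
$$L_{\bullet}(\phi)(d_{+}v) = \phi(q^k X_1 T_1^{-1} \cdots T_k^{-1} v) = q^k X_1 T_1^{-1} \cdots T_k^{-1} \phi(v) = d_{+} L_{\bullet}(\phi)(v),$$
and the analogous computation for $T_i$, $z_i$, and $d_{-}$ is identical in spirit. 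Combined with Proposition \ref{finite rank main construction}, which guarantees that $L_{\bullet}(W)$ and $L_{\bullet}(W')$ are genuine $\mathbb{B}_{q,t}^{(n)}$-modules, this shows $L_{\bullet}(\phi)$ is a morphism in $\mathbb{B}_{q,t}^{(n)}\text{-}\MOD$.

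The remaining functoriality axioms are then immediate from the fact that $L_{\bullet}(\phi)$ is defined as an honest restriction: composition of restrictions is the restriction of the composition, so $L_{\bullet}(\phi \circ \psi) = L_{\bullet}(\phi) \circ L_{\bullet}(\psi)$, and clearly $L_{\bullet}(\mathrm{id}_W) = \mathrm{id}_{L_{\bullet}(W)}$. There is no real obstacle here; the content of the corollary is entirely contained in the observation that the $\mathbb{B}_{q,t}^{(n)}$-structure on $L_{\bullet}(W)$ is built from operators in $\sD_n^{+}$, so any $\sD_n^{+}$-intertwiner automatically intertwines the induced structures. If anything requires a moment of care, it is the preservation of the subspace $L_{\bullet}(W) \subset W$ under $\phi$, but this follows at once from $\phi X_i = X_i \phi$ and $\phi \epsilon_k = \epsilon_k \phi$ in the DAHA.
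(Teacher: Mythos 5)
Your proof is correct and follows essentially the same route as the paper: restrict $\phi$ componentwise, note that $\phi(X_1\cdots X_k\epsilon_k(u)) = X_1\cdots X_k\epsilon_k(\phi(u))$ places the image in $L_k(W')$, and conclude intertwining because every operator in Definition \ref{operators construction} is built from the $\sD_n^{+}$-action. Your extra explicit checks of the composition and identity axioms are fine but add nothing beyond the paper's argument.
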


\begin{proof}
    Suppose $\phi: U \rightarrow W$ is a homogeneous $\sD_{n}^{+}$-module map. Now for any $0 \leq k \leq n$ we see that if $v = X_1\cdots X_k \epsilon_k(u)\in L_k(U)$ then 
    $$\phi(v) = \phi(X_1\cdots X_k \epsilon_k(u)) = X_1\cdots X_k \epsilon_k(\phi(u)) \in L_k(W).$$
    Thus $\phi$ yields a map $\phi_{\bullet}: L_{\bullet}(U) \rightarrow L_{\bullet}(W)$ given by restricting $\phi$ to each of the subspaces $L_k(U) \subset U.$ From Definition \ref{operators construction} we see that each of the operators $T_i,z_i,d_{-},d_{+}$ is expressed \textit{entirely} in terms of the action of $\sD_n^{+}$ on $U$ and as such we conclude that $\phi_{\bullet}$ is a  $\mathbb{B}_{q,t}^{(n)}$ module map.
\end{proof}

\subsection{The Polynomial Case}
The goal of this section is to relate the $\mathbb{B}_{q,t}^{(n)}$ modules $L_{\bullet}(W)$ constructed above to the polynomial representation $V^{\pol}_{\bullet}$ of $\mathbb{B}_{q,t}$ in the case when $W = V_{\pol}^{(n)}.$ We will show that there are natural maps $x_1\cdots x_k\mathbb{Q}(q,t)[x_1,\ldots,x_k]\otimes \Lambda \rightarrow x_1\cdots x_k\mathbb{Q}(q,t)[x_1,\ldots,x_n]^{\mathfrak{S}_{(1^k,n-k)}}$ which are $\mathbb{B}_{q,t}^{(n)}$ module projections. This is nontrivial since the definitions of $z_i$ and $d_{-}$ are quite different in both modules. We will use the work of Ion-Wu to bridge this gap.

\begin{defn}\cite{Ion_2022}
    Let $\mathscr{P}_{as}^{+}:= \mathbb{Q}(q,t)[x_1,x_2,\ldots]\otimes \Lambda$ denote the space of \textbf{\textit{almost symmetric functions }}with distinguished subspaces $\mathbb{Q}(q,t)[x_1,\ldots,x_k]\otimes \Lambda$ of functions which are symmetric after k. We will write $\mathfrak{X}_{k} := x_{k+1}+x_{k+2}+\ldots.$ Let $\Xi^{(n)}:\mathscr{P}_{as}^{+} \rightarrow \mathbb{Q}(q,t)[x_1,\ldots, x_n]$ be the projection homomorphism defined by $\Xi^{(n)}(x_{n+i})= 0$ for all $i \geq 0.$ By abuse of notation we will also denote by $\Xi^{(n)}$ any projection map defined on polynomials defined by restricting $\Xi^{(n)}$ to any subspace of $\mathscr{P}_{as}^{+}$ notably including $\mathbb{Q}(q,t)[x_1,\cdots,x_{n},x_{n+1}]$ and $x_1\cdots x_k \mathbb{Q}(q,t)[x_1,\ldots,x_k]\otimes \Lambda.$ Let $\mathscr{P}_{\bullet}:= \bigoplus_{k\geq 0} \mathbb{Q}(q,t)[x_1,\ldots,x_k]\otimes \Lambda.$ Consider the following operators given on $f \in \mathbb{Q}(q,t)[x_1,\ldots,x_k]\otimes \Lambda$ as follows:

    \begin{itemize}
        \item $T_i(f) := s_i(f) + (1-q)x_i \frac{f-s_i(f)}{x_i-x_{i+1}}$ 
        \item $d_{+}(f) := q^{k}X_1T_{1}^{-1}\cdots T_{k}^{-1}(f)$
        \item $d_{-}(x_1^{a_1}\cdots x_{k+1}^{a_{k+1}}F[\mathfrak{X}_{k+1}]):= -x_1^{a_1}\cdots x_{k}^{a_{k}}u^{a_{k+1}} F[\mathfrak{X}_k-u]\Exp[(1-q)u^{-1}\mathfrak{X}_k]~|_{u^0}$
        \item $z_i(f) = (qt)^{-1}\mathscr{Y}_i(f) :=
 \lim_{m} (qt)^{-1} \widetilde{Y}_i^{(m)} \Xi^{(m)}(f)$ 
    \end{itemize}
    where $\mathscr{Y}_i$ are the \textit{limit} Cherednik operators, $\lim_m$ is the limit as defined by Ion-Wu (with $q$ and $t$ swapped) and $\widetilde{Y}_i^{(m)}$ are the \textit{deformed} Cherednik operators.
\end{defn}

We can use the work of Ion-Wu to relate the above $\mathbb{B}_{q,t}$ module $\mathscr{P}_{\bullet}$ to the $\mathbb{B}_{q,t}$ module $W^{\pol}_{\bullet}$ defined by Carlsson-Gorsky-Mellit as follows.

\begin{thm*}\label{Ion-Wu to CGM}\cite{Ion_2022}
    The maps $T_i,d_{+},d_{-},z_i$ on $\mathscr{P}_{\bullet}$ define a representation of $\mathbb{B}_{q,t}.$ This representation is isomorphic to the $\mathbb{B}_{q,t}$ representation on $W^{\pol}_{\bullet}:= \bigoplus_{k\geq 0} (y_1\cdots y_{k})^{-1}V^{\pol}_{k}$ defined by Carlsson-Gorsky-Mellit via the map $\Phi_{\bullet} = \bigoplus_{k \geq 0}\Phi_k$ defined by 
    $$\Phi_{k}(x_1^{a_1}\cdots x_k^{a_k}F[\mathfrak{X}_k]):= y_1^{a_1-1}\cdots y_k^{a_k-1}F\left[\frac{X}{q-1}\right].$$
\end{thm*}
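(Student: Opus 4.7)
The plan is to deduce both claims at once by exhibiting $\Phi_\bullet$ as a bijective, grading-preserving intertwiner from $\sP_\bullet$ to $W^{\pol}_\bullet$. Since Carlsson-Gorsky-Mellit have already verified that their operators $T_i, d_+, d_-, z_i$ on $W^{\pol}_\bullet$ satisfy the defining relations of $\mathbb{B}_{q,t}$, once the intertwining is established the relations transport back along $\Phi_\bullet^{-1}$, simultaneously proving both that $\sP_\bullet$ carries a $\mathbb{B}_{q,t}$-module structure and that it is isomorphic to $W^{\pol}_\bullet$.

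The first step is bijectivity. Any element of $\mathbb{Q}(q,t)[x_1,\ldots,x_k] \otimes \Lambda$ has a unique finite expansion $\sum_a x_1^{a_1}\cdots x_k^{a_k} F_a[\mathfrak{X}_k]$ with $a \in \mathbb{Z}_{\geq 0}^k$ and $F_a \in \Lambda$, and $\Phi_k$ sends it to $\sum_a y_1^{a_1-1}\cdots y_k^{a_k-1} F_a[X/(q-1)]$. The plethystic substitution $F \mapsto F[X/(q-1)]$ is an invertible $\mathbb{Q}(q,t)$-linear endomorphism of $\Lambda$ (acting as a nonzero scalar on each power sum $p_n$), and the Laurent monomials $y_1^{a_1-1}\cdots y_k^{a_k-1}$ with $a_i \geq 0$ form a free basis of $(y_1\cdots y_k)^{-1}\mathbb{Q}(q,t)[y_1,\ldots,y_k]$, so $\Phi_k$ is a graded vector space isomorphism onto $(y_1\cdots y_k)^{-1}V^{\pol}_k$. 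The second step is to verify $\Phi_\bullet A = A \Phi_\bullet$ for each $A \in \{T_i, d_+, d_-, z_i\}$. The $T_i$ case is immediate since on both sides $T_i$ is the Demazure-Lusztig operator in the leading variables $x_i, x_{i+1}$ (resp.\ $y_i, y_{i+1}$), which are unaffected by the plethystic substitution on the symmetric-function factor. For $d_+$ the matching reduces to the identity that multiplication by $x_{k+1}$ on $F[\mathfrak{X}_{k+1}]$ corresponds through $\Phi$ to multiplication by $y_{k+1}$ combined with the shift $F[X/(q-1)] \mapsto F[X/(q-1) + y_{k+1}]$, which is exactly the effect of the Carlsson-Gorsky-Mellit prescription $F[X + (q-1)y_{k+1}]$ post-composed with $X \mapsto X/(q-1)$. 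For $d_-$, one identifies $u \leftrightarrow y_k$ and matches $F[\mathfrak{X}_k - u]$ with $F[X - (q-1)y_k]$ after plethystic rescaling, together with the coefficient extractions $|_{u^0}$ and $|_{y_k^{-1}}$, by a direct computation on monomials.

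The main obstacle is the operator $z_k$, whose definition on $\sP_\bullet$ is the Ion-Wu limit $\lim_m (qt)^{-1}\widetilde{Y}_i^{(m)}\Xi^{(m)}$ of deformed Cherednik operators, a priori only known to exist abstractly. The first task is to invoke the Ion-Wu convergence theorem to rewrite $z_k$ in a closed plethystic form on $\mathbb{Q}(q,t)[x_1,\ldots,x_k]\otimes \Lambda$; the real work is then to match that closed form, through $\Phi_k$, against the explicit plethystic expression defining $z_k$ on $W^{\pol}_\bullet$. It suffices to check this on the generators $x_1^{a_1}\cdots x_k^{a_k} h_r[\mathfrak{X}_k]$, and the remaining $z_i$ for $i < k$ are then handled via the recursion $z_i = q^{-1}T_i^{-1}z_{i+1}T_i^{-1}$ together with the intertwining of $T_i$ already established. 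Once this verification is complete, $\Phi_\bullet$ is a $\mathbb{B}_{q,t}$-equivariant isomorphism, yielding both conclusions of the theorem.
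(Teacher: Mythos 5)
This statement is not proved in the paper at all: it is quoted, with the citation \cite{Ion_2022}, as a (weakened) form of Ion--Wu's main theorem, and the paper's surrounding remark makes clear that the author is importing it rather than re-deriving it. So there is no internal proof to compare against; what you have written is an outline for re-proving Ion--Wu's result, and judged on those terms it has a genuine gap. Your strategy (show $\Phi_\bullet$ is a graded bijection intertwining each generator, then transport the $\mathbb{B}_{q,t}$-relations from $W^{\pol}_\bullet$ back to $\sP_\bullet$) is the natural one, and the bijectivity, $T_i$, $d_+$ and $d_-$ steps are plausible routine computations. But the entire substance of the theorem is the $z_i$ case, and there you write only that one should ``invoke the Ion-Wu convergence theorem to rewrite $z_k$ in a closed plethystic form'' and that ``the real work is then to match that closed form'' against the Carlsson--Gorsky--Mellit formula. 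That matching is precisely the hard content of Ion--Wu's paper: one must establish that the sequence $(qt)^{-1}\widetilde{Y}_i^{(m)}\Xi^{(m)}(f)$ actually converges on each $\mathbb{Q}(q,t)[x_1,\ldots,x_k]\otimes\Lambda$ (this is where the deformed Cherednik operators and the non-archimedean-style stability analysis enter), compute the limit explicitly, and reconcile the resulting expression with the plethystic $z_k$ of Definition of $V^{\pol}_\bullet$, including the conventions issue that the paper's statement has $q$ and $t$ swapped relative to Ion--Wu. None of this is carried out, so the proposal defers rather than proves the decisive step.

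Two smaller points. First, your reduction ``it suffices to check this on the generators $x_1^{a_1}\cdots x_k^{a_k}h_r[\mathfrak{X}_k]$'' is not sufficient as stated: $z_k$ is only linear, not multiplicative, and these elements span only the subspace whose symmetric part is a single complete homogeneous function; you need a genuine spanning set such as $x_1^{a_1}\cdots x_k^{a_k}F[\mathfrak{X}_k]$ with $F$ an arbitrary monomial in the $h_r$'s (or argue from an eigenbasis). Second, handling $z_i$ for $i<k$ via $z_i=q^{-1}T_i^{-1}z_{i+1}T_i^{-1}$ is fine, but only after the $z_k$ case is actually established. As it stands the proposal is an accurate plan whose central verification coincides with the cited theorem it is meant to prove, so it cannot substitute for the reference.
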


\begin{remark}
    Ion-Wu in their paper also construct the additional operator $d_{+}^{*}$ on $\mathscr{P}_{\bullet}$ from which they obtain an action of $\mathbb{A}_{q,t}$ on $\mathscr{P}_{\bullet}$. Further, they show that this $\mathbb{A}_{q,t}$ module is isomorphic to the standard $\mathbb{A}_{q,t}$ representation as defined by Mellit \cite{M2021} which is the same as the Carlsson-Gorsky-Mellit action of $\mathbb{A}_{q,t}$ on $W_{\bullet}^{\pol}.$ The result as stated above is thus a strictly weaker result than the main theorem of Ion-Wu but as we are only interested in the subalgebra $\mathbb{B}_{q,t}$ of $\mathbb{A}_{q,t}$, we will only require the above result as stated.
\end{remark}

By the above theorem of Ion-Wu we find that each of the spaces $y_1\cdots y_k W_k^{\pol} = V_k^{\pol}$ gets mapped by $\Phi_k^{-1}$ to the space $x_1\cdots x_k \mathbb{Q}(q,t)[x_1,\ldots,x_k] \otimes \Lambda$ which we will call $L_k^{\pol}.$ Thus we see that $\mathbb{B}_{q,t}$ acts on the space $L_{\bullet}^{\pol}:= \bigoplus_{k\geq 0} L_k^{\pol} \subset \sP_{\bullet}.$ For all $n \geq 0$ can relate $L_{\bullet}^{\pol}$ to $L_{\bullet}(V_{\pol}^{(n)})$ in the following way:

\begin{prop}\label{quotient map prop for poly}
    The map $\Xi_{\bullet}^{(n)}:L_{\bullet}^{\pol} \rightarrow L_{\bullet}(V_{\pol}^{(n)})$ defined component-wise by $\Xi^{(n)}$ is a $\mathbb{B}_{q,t}^{(n)}$ module map.
\end{prop}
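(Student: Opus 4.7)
The plan is to verify directly that $\Xi_{\bullet}^{(n)}$ intertwines each of the generators $T_i, d_{+}, d_{-}, z_i$ of $\mathbb{B}_{q,t}^{(n)}$, using the formulas of Definition \ref{operators construction} for $L_{\bullet}(V_{\pol}^{(n)})$ and the Ion-Wu formulas for $L_{\bullet}^{\pol}\subset \sP_{\bullet}$. First I would confirm well-definedness: for $f\in\mathbb{Q}(q,t)[x_1,\ldots,x_k]\otimes\Lambda$, the projection $\Xi^{(n)}$ sends the plethystic alphabet $X$ to $x_{k+1}+\cdots+x_n$, so $\Xi^{(n)}(f)$ is symmetric in $x_{k+1},\ldots,x_n$ and hence $\epsilon_k^{(n)}\Xi^{(n)}(f)=\Xi^{(n)}(f)$; therefore $\Xi^{(n)}(x_1\cdots x_k f) = x_1\cdots x_k\,\Xi^{(n)}(f)\in L_k(V_{\pol}^{(n)})$.

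The cases of $T_i$ (for $1\leq i\leq k-1$) and $d_{+}$ (for $0\leq k\leq n-1$) are then immediate: in both modules the operator is given by the same formula in terms of Demazure-Lusztig operators $T_j^{\pm 1}$ and left-multiplication by $X_1 = x_1$, all touching only variables $x_1,\ldots,x_{k+1}$ of index at most $n$, which trivially commute with $\Xi^{(n)}$ since $\Xi^{(n)}$ only kills variables of index greater than $n$.

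The first substantive case is $d_{-}:L_{k+1}\to L_{k}$. From the explicit computation inside the proof of Lemma \ref{well-definedness of operators}, on $L_{k+1}(V_{\pol}^{(n)})$ one has
\[
d_{-}(x_1\cdots x_{k+1}\, g) \;=\; (q^{n-k}-1)\,x_1\cdots x_k\,\epsilon_k^{(n)}(x_{k+1}\, g)
\]
valid for any $g$ symmetric in $x_{k+2},\ldots,x_n$. On the other side I would apply the Ion-Wu plethystic formula to $x_1^{a_1}\cdots x_{k+1}^{a_{k+1}}F[\mathfrak{X}_{k+1}]\in L_{k+1}^{\pol}$ and then $\Xi^{(n)}$, which replaces $\mathfrak{X}_k$ by $x_{k+1}+\cdots+x_n$; rewriting the plethystic exponential via $\Exp[(1-q)u^{-1}(x_{k+1}+\cdots+x_n)] = \prod_{j=k+1}^{n}(1-qu^{-1}x_j)/(1-u^{-1}x_j)$ and extracting the $u^0$-coefficient produces a finite Hecke-type symmetrization that coincides with the boxed identity applied to $\Xi^{(n)}$ of the input. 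I expect this plethystic bookkeeping to be the main technical obstacle in the whole proof.

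Finally, for $z_i$ with $1\leq i\leq k$, the relation $z_i = q^{-1}T_i^{-1}z_{i+1}T_i^{-1}$, which holds in both $\sD_n^{+}$ and $\mathbb{B}_{q,t}$, combined with the $T_i$-equivariance already established, reduces the check to a single base index; I would take $i=1$. For $z_1 = (qt)^{-1}\mathscr{Y}_1$ I would invoke the defining property $\mathscr{Y}_1 = \lim_m \widetilde{Y}_1^{(m)}\Xi^{(m)}$ from Ion-Wu: for a fixed $f\in L_k^{\pol}$ the sequence stabilizes, and after further composing with $\Xi^{(n)}$ it collapses (up to the normalization built into the deformed $\widetilde{Y}_1^{(m)}$) to $Y_1^{(n)}\Xi^{(n)}(f)$, giving $\Xi^{(n)}\circ z_1 = z_1\circ\Xi^{(n)}$ on $L_k^{\pol}$; propagating via $z_{i+1} = qT_iz_iT_i$ then handles all $z_i$. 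Taken together, these four verifications yield the desired $\mathbb{B}_{q,t}^{(n)}$-module intertwining.
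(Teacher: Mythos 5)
Your treatment of $T_i$, $d_{+}$, and the reduction of $z_i$ to a single index is in line with the paper, but the $d_{-}$ case contains a genuine gap: you correctly identify that one must match $\Xi^{(n)}\circ d_{-}^{\mathrm{IW}}$ (the projected Ion--Wu plethystic formula, with $\Exp[(1-q)u^{-1}\mathfrak{X}_k]$ expanded as $\prod_{j=k+1}^{n}(1-qu^{-1}x_j)/(1-u^{-1}x_j)$ and the $u^0$-coefficient extracted) against the finite formula $(q-1)(1+qT_{k+1}^{-1}+\cdots+q^{n-k-1}T_{n-1}^{-1}\cdots T_{k+1}^{-1})$ on $L_{k+1}(V_{\pol}^{(n)})$, but you never establish this constant-term identity; you explicitly defer it as ``the main technical obstacle.'' That identity \emph{is} the content of the $d_{-}$ case, so the proof is incomplete at exactly its hardest point. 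The paper avoids this computation entirely by quoting a formula from \cite{MBWArxiv} expressing $d_{-}$ on $L_{\bullet}^{\pol}$ as $-\lim_m\bigl(\tfrac{1+qT_{k+1}^{-1}+\cdots+q^{m-k-1}T_{m-1}^{-1}\cdots T_{k+1}^{-1}}{1+q+\cdots+q^{m-k-1}}\bigr)\Xi^{(m)}$; with $d_{-}$ already written as a stable limit of finite Hecke symmetrizers, commuting it past $\Xi^{(n)}$ reduces to observing that the extra summand $q^{m-k}T_m^{-1}\cdots T_{k+1}^{-1}X_1\cdots X_{k+1}$ produces a factor $X_{m+1}$ which is killed by $\Xi^{(m)}$. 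If you want to keep your direct plethystic route, you must actually prove the residue identity (essentially the expansion of $\epsilon_k$ in terms of $\epsilon_{k+1}$ realized as a constant-term extraction); as written, you have restated the claim rather than proved it.

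A secondary weakness is the $z_1$ step: the assertion that $\lim_m \widetilde{Y}_1^{(m)}\Xi^{(m)}$ ``collapses'' after composing with $\Xi^{(n)}$ to $Y_1^{(n)}\Xi^{(n)}$ is exactly what needs justification. The paper pins this down with two specific facts from Ion--Wu, both available because elements of $L_k^{\pol}$ are divisible by $x_1\cdots x_k$: the deformed operator agrees with the genuine Cherednik operator on multiples of $X_i$ (that is, $\widetilde{Y}_i^{(m)}X_i = Y_i^{(m)}X_i$), and the stability $Y_i^{(m)}X_i\,\Xi^{(m)} = \Xi^{(m)}Y_i^{(m+1)}X_i$. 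Your phrase ``up to the normalization built into the deformed $\widetilde{Y}_1^{(m)}$'' gestures at the first fact without stating it, and the second is not mentioned at all; adding these two citations would make that part of your argument essentially the paper's.
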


\begin{proof}
    We need to show that $\Xi^{(n)}$ commutes with the operators $T_i,z_i,d_{+},d_{-}$ as defined on both of the spaces $L_{\bullet}^{\pol}$ and $L_{\bullet}(V_{\pol}^{(n)})$ respectively. For $T_i$ and $d_{+}$ this is immediate. For $z_i$ we note that from the construction of the deformed Cherednik operators $\widetilde{Y}_{i}^{(n)}$ \cite{Ion_2022} we have that if $1 \leq i \leq k$ then 
    $$\widetilde{Y}_{i}^{(n)}X_i = Y_i^{(n)} X_i.$$ Further, from Ion-Wu we also know that (using this paper's conventions) for all $n \geq i$
    $$Y_i^{(n)} X_i ~\Xi^{(n)} = \Xi^{(n)}  Y_i^{(n+1)} X_i.$$ Thus for $f \in L_{k}^{\pol}$ and $1\leq i \leq k$ we find
    $$Y_i^{(n)}\Xi^{(n)}(f) = \Xi^{(n)} \mathscr{Y}_i(f) $$ and so 
    $$z_i\Xi^{(n)} = \Xi^{(n)} z_i.$$

Let $0\leq k \leq n-1$ and $f = x_1\cdots x_{k+1} g$ for $g \in \mathbb{Q}(q,t)[x_1,\ldots, x_k].$ From the author's prior paper \cite{MBWArxiv}, we know that 

\begin{align*}
    d_{-}(f) &= -\lim_{m}\left( \frac{1+qT_{k+1}^{-1}+\ldots + q^{m-k-1}T_{m-1}^{-1}\cdots T_{k+1}^{-1}}{1+q+\ldots + q^{m-k-1}} \right) \Xi^{(m)}(f) \\
    &= -\left( \frac{1}{1+q+q^2+\ldots} \right) \lim_{m}\left(1+qT_{k+1}^{-1}+\ldots + q^{m-k-1}T_{m-1}^{-1}\cdots T_{k+1}^{-1}\right)\Xi^{(m)}(f) \\
    &= -(1-q) \lim_{m}\left(1+qT_{k+1}^{-1}+\ldots + q^{m-k-1}T_{m-1}^{-1}\cdots T_{k+1}^{-1}\right)\Xi^{(m)}(f) \\
    &= (q-1) \lim_{m} \left(1+qT_{k+1}^{-1}+\ldots + q^{m-k-1}T_{m-1}^{-1}\cdots T_{k+1}^{-1}\right)X_1\cdots X_{k+1}\Xi^{(m)}(g).
\end{align*}

Now if $m \geq k$ then 
\begin{align*}
    &\Xi^{(m)}\left(1+qT_{k+1}^{-1}+\ldots + q^{m-k}T_{m}^{-1}\cdots T_{k+1}^{-1}\right)X_1\cdots X_{k+1}\\
    &\Xi^{(m)}\left(1+qT_{k+1}^{-1}+\ldots + q^{m-k-1}T_{m-1}^{-1}\cdots T_{k+1}^{-1}\right)X_1\cdots X_{k+1} + \Xi^{(m)}q^{m-k}T_{m}^{-1}\cdots T_{k+1}^{-1}X_1\cdots X_{k+1} \\
    &= \left(1+qT_{k+1}^{-1}+\ldots + q^{m-k-1}T_{m-1}^{-1}\cdots T_{k+1}^{-1}\right)X_1\cdots X_{k+1} \Xi^{(m)} + \Xi^{(m)}X_{m+1}T_{m}\cdots T_{k+1}X_1\cdots X_{k} \\
    &= \left(1+qT_{k+1}^{-1}+\ldots + q^{m-k-1}T_{m-1}^{-1}\cdots T_{k+1}^{-1}\right)X_1\cdots X_{k+1} \Xi^{(m)}. \\
\end{align*}

Therefore, 
\begin{align*}
    &\Xi^{(n)}(d_{-}(f)) \\
    &= (q-1) \left(1+qT_{k+1}^{-1}+\ldots + q^{n-k-1}T_{n-1}^{-1}\cdots T_{k+1}^{-1}\right)X_1\cdots X_{k+1}\Xi^{(n)}(g)\\
    &= (q-1) \left(1+qT_{k+1}^{-1}+\ldots + q^{n-k-1}T_{n-1}^{-1}\cdots T_{k+1}^{-1}\right)\Xi^{(n)}(X_1\cdots X_{k+1}g) \\
    &= (q-1) \left(1+qT_{k+1}^{-1}+\ldots + q^{n-k-1}T_{n-1}^{-1}\cdots T_{k+1}^{-1}\right)\Xi^{(n)}(f)\\
    &= d_{-}\Xi^{(n)}(f).\\
\end{align*}

Thus $\Xi^{(n)}d_{-} = d_{-}\Xi^{(n)}$ and so $\Xi_{\bullet}^{(n)}$ is a $\mathbb{B}_{q,t}^{(n)}$ module map.
\end{proof}

\begin{remark}
    Since $L_{\bullet}^{\pol}$ is isomorphic as a $\mathbb{B}_{q,t}$ module to $V_{\bullet}^{\pol}$ via the map $\Phi_{\bullet}$ and from Proposition \ref{quotient map prop for poly} we know that $\Xi^{(n)}:L_{\bullet}^{\pol} \rightarrow L_{\bullet}(V_{\pol}^{(n)})$ is a $\mathbb{B}_{q,t}^{(n)}$ module quotient, it follows that $L_{\bullet}(V_{\pol}^{(n)})$ is a $\mathbb{B}_{q,t}^{(n)}$ module quotient of $\Res^{\mathbb{B}_{q,t}}_{\mathbb{B}_{q,t}^{(n)}} V_{\bullet}^{\pol}.$
\end{remark}

\subsection{$\mathbb{B}_{q,t}$ Modules From Compatible Sequences}

We will now build representations for the full $\mathbb{B}_{q,t}$ algebra given certain special families of DAHA representations. 

\begin{defn}
    Let $C= \left( (V^{(n)})_{n \geq n_1}, (\Pi^{(n)})_{n\geq n_1}\right)$ be a collection of $\mathbb{Q}(q,t)$-vector spaces and maps $\Pi^{(n)}: V^{(n+1)}\rightarrow V^{(n)}$ with $n_1 \geq 1.$ We call $C$ a \textit{\textbf{compatible sequence}} if the following conditions hold:
    \begin{itemize}
        \item Each $V^{(n)}$ is a graded $\sD_n^{+}$-module
        \item The maps $\Pi^{(n)}: V^{(n+1)}\rightarrow V^{(n)}$ are degree-preserving.
        \item Each map $\Pi^{(n)}$ is a $\sA_{n}^{X}$ module map.
        \item $\Pi^{(n)}X_{n+1} = 0$
        \item $\Pi^{(n)}\pi^{(n+1)}T_n = \pi^{(n)}\Pi^{(n)}.$
    \end{itemize}

Given compatible sequences $C= \left( (V^{(n)})_{n \geq n_1}, (\Pi^{(n)})_{n\geq n_1}\right)$ and $D= \left( (W^{(n)})_{n \geq n_2}, (\Psi^{(n)})_{n\geq n_2}\right)$ a homomorphism $\phi: C \rightarrow D$ is a collection of maps $\phi = (\phi^{(n)})_{n \geq \max(n_1,n_2)}$ with $\phi^{(n)}: V^{(n)} \rightarrow W^{(n)}$ such that 
\begin{itemize}
    \item $\phi^{(n)}$ are degree-preserving $\sD_n^{+}$ module maps.
    \item $\phi^{(n)}\Pi^{(n)} = \Psi^{(n)}\phi^{(n+1)}.$
\end{itemize}

We will write $\mathfrak{C}$ for the category of compatible sequences.
\end{defn} 

\begin{remark}
    The importance of the relation $\Pi^{(n)}\pi^{(n+1)}T_n = \pi^{(n)}\Pi^{(n)}$ can be traced back to at least the work of Ion-Wu \cite{Ion_2022} on their stable-limit DAHA. This relation allowed Ion-Wu to construct the limit Cherednik operators on the space of almost symmetric functions utilizing a remarkable stability relation for the classical Cherednik operators. We will be following a similar idea in a different setting in this section of the paper.
    
    This relation may be interpreted as relating to the natural inclusion map on extended affine symmetric groups $\widehat{\mathfrak{S}}_n \rightarrow \widehat{\mathfrak{S}}_{n+1}$ given by $s_i \rightarrow s_i$ for $1\leq i \leq n-1$ and $\pi \rightarrow \pi s_{n}.$ Diagrammatically, this map sends the crossing diagram for some $\sigma \in \widehat{\mathfrak{S}}_n$ on $n$-strands to the corresponding crossing diagram on $(n+1)$-strands where we send $n+1$ to itself. 
\end{remark}

For the remainder of this section we fix a compatible sequence $C= \left( (V^{(n)})_{n \geq n_0}, (\Pi^{(n)})_{n\geq n_0}\right).$ It is easy to check that for $0 \leq k \leq n,$ $\Pi^{(n)}(L_{k}(V^{(n+1)})) \subset L_{k}(V^{(n)})$ so that the following definition makes sense.

\begin{defn}
    For $k \geq 0$ define $\fL_k = \fL_k(C)$ to be the stable-limit
    $\fL_k:= \lim_{\leftarrow} L_k(V^{(n)})$
    with respect to the maps $\Pi^{(n)}.$
    We define $\fL_{\bullet} = \fL_{\bullet}(C)$ as
    $\fL_{\bullet} = \bigoplus_{k \geq 0} \fL_{k}.$
    We will write $\Pi^{(n)}_{\bullet}: L_{\bullet}(V^{(n+1)}) \rightarrow L_{\bullet}(V^{(n)}) $ for the map obtained by restricting $\Pi^{(n)}$ to each component $L_{k}(V^{(n+1)}).$
\end{defn}

If we let $\widetilde{V}$ denote the stable-limit of the spaces $V^{(n)}$ with respect to the maps $\Pi^{(n)}$ then we can reinterpret the spaces $\fL_k$ as 
$$\fL_k = \{ v \in X_1\cdots X_k \widetilde{V} |~ T_i(v) = v~~ \text{for}~~ i > k \}.$$ 

\begin{lem}\label{stability lemma}
    For $n \geq n_0$ the map $\Pi^{(n)}_{\bullet}: L_{\bullet}(V^{(n+1)}) \rightarrow L_{\bullet}(V^{(n)})$ is a $\mathbb{B}_{q,t}^{(n)}$-module map.
\end{lem}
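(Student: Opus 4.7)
The algebra $\mathbb{B}_{q,t}^{(n)}$ is generated by the operators $T_i$, $z_i$, $d_+$, and $d_-$, so the plan is to check directly that $\Pi^{(n)}_{\bullet}$ intertwines each of these four families on $L_{\bullet}$. For $T_i$ and $d_+$ the verification is immediate: both are expressed purely through elements of $\sA_n^X$ (namely $T_j$ with $j \leq n-1$ and $X_1$ inside the formula $d_+(v) = q^k X_1 T_1^{-1} \cdots T_k^{-1} v$), so the $\sA_n^X$-module property of $\Pi^{(n)}$ gives the required commutation with no calculation.

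For $z_i$, I would first reduce to the case $i=1$ using the standard DAHA identity $Y_i = q^{-i+1} T_{i-1} \cdots T_1 Y_1 T_1 \cdots T_{i-1}$, since each conjugating $T_j$ (with index $\leq k-1 \leq n-1$) commutes with $\Pi^{(n)}$. For $z_1$ the key input is the explicit expression derived during the proof of Lemma \ref{well-definedness of operators}, namely
$$ z_1 v = q^{k-1} X_1 \cdots X_k \pi^{(N)} T_{N-1} T_{N-2} \cdots T_k \epsilon_k^{(N)}\bigl( T_{k-1}^{-1} \cdots T_1^{-1} w \bigr) $$
for $v = X_1 \cdots X_k \epsilon_k^{(N)}(w) \in L_k$ inside $\sD_N^+$. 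Applying $\Pi^{(n)}$ to the $N=n+1$ version, the extra $T_n$ at the left of the string $T_n T_{n-1} \cdots T_k$ is exactly what the compatibility relation $\Pi^{(n)} \pi^{(n+1)} T_n = \pi^{(n)} \Pi^{(n)}$ is designed to absorb, producing $\pi^{(n)}$ on the left. The remaining $T$-operators commute with $\Pi^{(n)}$, and I may freely insert an $\epsilon_k^{(n)}$ around $\Pi^{(n)}(\epsilon_k^{(n+1)}(w))$ since that vector is already $T_{k+1}, \ldots, T_{n-1}$-fixed; the result matches the $N=n$ version of the formula evaluated on $\Pi^{(n)}(v)$.

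For $d_-$, comparing the formulas on $L_{k+1}(V^{(n+1)})$ and $L_{k+1}(V^{(n)})$ shows they differ only by the single summand $(q-1) q^{n-k} T_n^{-1} T_{n-1}^{-1} \cdots T_{k+1}^{-1}$; every other summand commutes with $\Pi^{(n)}$ automatically. The main obstacle, and the heart of the lemma, is to show that $\Pi^{(n)}$ annihilates this extra summand on any $v \in L_{k+1}(V^{(n+1)})$. Writing $v = X_1 \cdots X_{k+1} \epsilon_{k+1}^{(n+1)}(w)$ and iterating the DAHA identity $T_j^{-1} X_j = q^{-1} X_{j+1} T_j$ starting with $j = k+1$ and working up to $j = n$, a straightforward induction produces
$$ T_n^{-1} T_{n-1}^{-1} \cdots T_{k+1}^{-1} v = q^{-(n-k)} X_1 \cdots X_k X_{n+1} T_n T_{n-1} \cdots T_{k+1} \epsilon_{k+1}^{(n+1)}(w). $$
Since the $X_j$ mutually commute, $X_{n+1}$ may be pulled past $X_1 \cdots X_k$ to the leftmost position, and then the axiom $\Pi^{(n)} X_{n+1} = 0$ annihilates the whole expression. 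This is the crucial use of that axiom in the definition of a compatible sequence, and it is the fundamental reason the construction $L_{\bullet}$ is compatible with the stable-limit procedure.
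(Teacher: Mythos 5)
Your proposal is correct and follows essentially the same route as the paper: $T_i$ and $d_+$ via $\sA_n^X$-linearity of $\Pi^{(n)}$, the extra summand of $d_-$ converted by $T_j^{-1}X_j=q^{-1}X_{j+1}T_j$ into a term killed by $\Pi^{(n)}X_{n+1}=0$, and $z_i$ handled by exposing a single $T_n$ next to $\pi^{(n+1)}$ so that $\Pi^{(n)}\pi^{(n+1)}T_n=\pi^{(n)}\Pi^{(n)}$ applies. The only (harmless) difference is organizational: you reduce $z_i$ to $z_1$ by conjugation and reuse the formula from Lemma \ref{well-definedness of operators}, while the paper computes directly with $Y_i=q^{n-i+2}T_{i-1}\cdots T_1\pi^{(n+1)}T_n^{-1}\cdots T_i^{-1}$ for general $i$.
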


\begin{proof}
    By definition $\Pi^{(n)}$ is a $\sA_n^{X}$-module map so for $1 \leq i \leq k-1,$
    $\Pi^{(n)}T_i = T_i \Pi^{(n)}.$ Further, we also know that if $k \leq n-1$ then $\Pi^{(n)}d_{+} = d_{+} \Pi^{(n)}$ since on $L_k,$ $d_{+} = q^{k}X_1T_1^{-1}\cdots T_k^{-1}.$ 

    Now let $1 \leq k \leq n$ and $v \in L_k$ say, $v = X_1\cdots X_k \epsilon_k(w).$
    We see that from a nearly identical calculation to one seen in the proof of Proposition \ref{quotient map prop for poly}
    \begin{align*}
        & \Pi^{(n)} d_{-}(v)\\
        &= \Pi^{(n)}(q-1)(1+qT_{k}^{-1}+\ldots + q^{n+1-k}T_{n}^{-1}\cdots T_{k}^{-1})(v)\\
        &= \Pi^{(n)}(q-1)(1+qT_{k}^{-1}+\ldots + q^{n+1-k}T_{n}^{-1}\cdots T_{k}^{-1})(X_1\cdots X_k \epsilon_k(w))\\
        &= \Pi^{(n)}(q-1)(1+qT_{k}^{-1}+\ldots + q^{n-k}T_{n-1}^{-1}\cdots T_{k}^{-1})(X_1\cdots X_k \epsilon_k(w)) + \Pi^{(n)}(q-1)q^{n+1-k}T_n^{-1}\cdots T_k^{-1}X_1\cdots X_k \epsilon_k(w)\\
        &= (q-1)(1+qT_{k}^{-1}+\ldots + q^{n-k}T_{n-1}^{-1}\cdots T_{k}^{-1})\Pi^{(n)}(X_1\cdots X_k \epsilon_k(w)) + \Pi^{(n)}(q-1)X_{n+1}T_n\cdots T_kX_1\cdots X_{k-1} \epsilon_k(w)\\
        &= (q-1)(1+qT_{k}^{-1}+\ldots + q^{n-k}T_{n-1}^{-1}\cdots T_{k}^{-1})\Pi^{(n)}(X_1\cdots X_k \epsilon_k(w))\\
        &= (q-1)(1+qT_{k}^{-1}+\ldots + q^{n-k}T_{n-1}^{-1}\cdots T_{k}^{-1})\Pi^{(n)}(v)\\
        &= d_{-}\Pi^{(n)}(v).\\
    \end{align*}

    Lastly, let $1 \leq i \leq k.$ Using the relation $\Pi^{(n)}\pi^{(n+1)}T_n = \pi^{(n)}\Pi^{(n)}$ we find
    \begin{align*}
        & \Pi^{(n)}z_i(v) \\
        &= \Pi^{(n)}(qt)^{-1}Y_i(v) \\
        &= (qt)^{-1}\Pi^{(n)}q^{n-i+2}T_{i-1}\cdots T_1\pi^{(n+1)}T_{n}^{-1}\cdots T_i^{-1}(v)\\
        &= (qt)^{-1}\Pi^{(n)}q^{n-i+2}T_{i-1}\cdots T_1\pi^{(n+1)}T_{n}^{-1}\cdots T_i^{-1}(X_1\cdots X_k \epsilon_k(w))\\
        &= (qt)^{-1}\Pi^{(n)}q^{n-i+2}T_{i-1}\cdots T_1\pi^{(n+1)}T_{n}^{-1}\cdots T_i^{-1}X_i(X_1\cdots X_{i-1}X_{i+1}\cdots X_k \epsilon_k(w))\\
        &= (qt)^{-1}\Pi^{(n)}q^{2}T_{i-1}\cdots T_1\pi^{(n+1)}X_{n+1}T_{n}\cdots T_i(X_1\cdots X_{i-1}X_{i+1}\cdots X_k \epsilon_k(w))\\
        &= (qt)^{-1}\Pi^{(n)}q^{2}tT_{i-1}\cdots T_1X_1\pi^{(n+1)}T_{n}\cdots T_i(X_1\cdots X_{i-1}X_{i+1}\cdots X_k \epsilon_k(w))\\
        &= (qt)^{-1}q^{2}tT_{i-1}\cdots T_1X_1\Pi^{(n)}\pi^{(n+1)}T_{n}\cdots T_i(X_1\cdots X_{i-1}X_{i+1}\cdots X_k \epsilon_k(w))\\
        &= (qt)^{-1}q^{2}tT_{i-1}\cdots T_1X_1\pi^{(n)}\Pi^{(n)}T_{n-1}\cdots T_i(X_1\cdots X_{i-1}X_{i+1}\cdots X_k \epsilon_k(w))\\
        &= (qt)^{-1}q^{2}T_{i-1}\cdots T_1\pi^{(n)}X_nT_{n-1}\cdots T_i(X_1\cdots X_{i-1}X_{i+1}\cdots X_k\Pi^{(n)} (\epsilon_k(w))\\
        &= (qt)^{-1}q^{n-i+1}T_{i-1}\cdots T_1\pi^{(n)}T_{n-1}^{-1}\cdots T_i^{-1} X_{i} (X_1\cdots X_{i-1}X_{i+1}\cdots X_k\Pi^{(n)} (\epsilon_k(w)) \\
        &= (qt)^{-1}q^{n-i+1}T_{i-1}\cdots T_1\pi^{(n)}T_{n-1}^{-1}\cdots T_i^{-1} \Pi^{(n)}(X_1\cdots X_k\epsilon_k(w)) \\
        &= z_i\Pi^{(n)}(v).\\
    \end{align*}
\end{proof}

As an immediate consequence of Lemma \ref{stability lemma} and Lemma \ref{stable-limit of modules} we may make the following definition.

\begin{defn}\label{main stable-limit construction}
    We define the graded $\mathbb{B}_{q,t}$ module structure on $\fL_{\bullet}$ given by the stable-limit of the graded $\mathbb{B}_{q,t}^{(n)}$ modules $L_{\bullet}^{(n)}$ with respect to the maps $\Pi^{(n)}_{\bullet}:L_{\bullet}^{(n+1)}\rightarrow L_{\bullet}^{(n)}.$
\end{defn}

\begin{example}
    In the case of the polynomial representations of $\sD_n^{+}$, $V_{\pol}^{(n)}$, we see using Proposition \ref{quotient map prop for poly} that $\fL_{\bullet}(C_{\pol}) \cong V^{\pol}_{\bullet}$  where 
    $$C_{\pol}:= \left( (V_{\pol}^{(n)})_{n \geq 1}, (\Xi_{\bullet}^{(n)})_{n\geq 1}\right).$$
\end{example}

The construction in Definition \ref{main stable-limit construction} associates to any compatible sequence $C$ a graded module $\fL_{\bullet}(C)$ of $\mathbb{B}_{q,t}.$ We can easily see that this construction is functorial.

\begin{thm}(Main Theorem)\label{main theorem}
    The map $C \rightarrow \fL_{\bullet}(C)$ is a covariant functor $\mathfrak{C} \rightarrow \mathbb{B}_{q,t}-\MOD.$
\end{thm}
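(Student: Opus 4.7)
The plan is to derive the theorem by combining the finite-rank functoriality of Corollary \ref{finite rank functorial} with the stable-limit functoriality recorded in Remark \ref{functoriality of stable-limit of modules}. The object map $C \mapsto \fL_\bullet(C)$ is already defined via Definition \ref{main stable-limit construction}, so the only work is to construct the morphism map $\phi \mapsto \fL_\bullet(\phi)$ and to verify the two functoriality axioms.

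Given a morphism $\phi = (\phi^{(n)})_{n \geq n_0}: C \to D$ in $\mathfrak{C}$, each component $\phi^{(n)}: V^{(n)} \to W^{(n)}$ is by definition a degree-preserving $\sD_n^{+}$-module map. First I would apply Corollary \ref{finite rank functorial} at each level $n$: since $\phi^{(n)}$ intertwines the actions of $X_i$ and $\epsilon_k$, it restricts to a graded $\mathbb{B}_{q,t}^{(n)}$-module map $\phi^{(n)}_\bullet: L_\bullet(V^{(n)}) \to L_\bullet(W^{(n)})$. Next, I would observe that the defining compatibility $\phi^{(n)} \Pi^{(n)} = \Psi^{(n)} \phi^{(n+1)}$ of $\mathfrak{C}$ restricts along the inclusions $L_k(V^{(n)}) \subset V^{(n)}$ to give
\[
\phi^{(n)}_\bullet \, \Pi^{(n)}_\bullet \;=\; \Psi^{(n)}_\bullet \, \phi^{(n+1)}_\bullet,
\]
so the tower $(\phi^{(n)}_\bullet)_n$ intertwines the two stable-limit systems.

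At this point I would invoke Remark \ref{functoriality of stable-limit of modules} with $R^{(n)} = \mathbb{B}_{q,t}^{(n)}$ and $\widetilde{R} = \lim_{\rightarrow} \mathbb{B}_{q,t}^{(n)} = \bB$. Its hypothesis, that the projections $\Pi^{(n)}_\bullet$ and $\Psi^{(n)}_\bullet$ are $\mathbb{B}_{q,t}^{(n)}$-module maps, is exactly the content of Lemma \ref{stability lemma}. The remark then produces a graded $\bB$-module homomorphism
\[
\fL_\bullet(\phi) : \fL_\bullet(C) \longrightarrow \fL_\bullet(D), \qquad \fL_\bullet(\phi)\bigl((v_n)_n\bigr) \;=\; \bigl(\phi^{(n)}_\bullet(v_n)\bigr)_n.
\]
Because $\fL_\bullet(\phi)$ is defined componentwise, the functoriality axioms are immediate: $\fL_\bullet(\mathrm{id}_C)$ sends $(v_n)_n$ to itself, and for composable morphisms $\phi: C \to D$, $\psi: D \to E$ one has $(\psi \circ \phi)^{(n)}_\bullet = \psi^{(n)}_\bullet \circ \phi^{(n)}_\bullet$ at each level by Corollary \ref{finite rank functorial}, hence $\fL_\bullet(\psi \circ \phi) = \fL_\bullet(\psi) \circ \fL_\bullet(\phi)$.

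There is essentially no genuine obstacle at this stage: the proof is a purely formal assembly of two earlier results. The real work has already been done in Lemma \ref{stability lemma}, where the compatibility of the $\mathbb{B}_{q,t}^{(n)}$-action with the projections $\Pi^{(n)}_\bullet$ is verified, and in Corollary \ref{finite rank functorial}, which supplies the finite-rank functorial level. Once one recognizes that the categorical setup of Remark \ref{functoriality of stable-limit of modules} applies verbatim with $R^{(n)} = \mathbb{B}_{q,t}^{(n)}$, the theorem follows.
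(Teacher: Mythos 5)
Your proposal is correct and follows essentially the same route as the paper: the paper's proof also reduces the theorem to Remark \ref{functoriality of stable-limit of modules} together with the observation (packaged in Corollary \ref{finite rank functorial}) that the $\mathbb{B}_{q,t}^{(n)}$-operators are expressed entirely through the $\sD_n^{+}$-action, with Lemma \ref{stability lemma} supplying the needed module-map property of the connecting maps. You have simply spelled out the formal assembly that the paper leaves implicit.
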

\begin{proof}
    This follows immediately using the functoriality described in Remark \ref{functoriality of stable-limit of modules} and from the fact that the operators on $\fL_{\bullet}(C)$ are described \textit{entirely} in terms of the action of each $\sD_n^{+}$ on $V^{(n)}.$
\end{proof}

\begin{remark}
Recently, Gonz\'{a}lez-Gorsky-Simental \cite{gonzález2023calibrated} introduced the extended algebra $\mathbb{B}_{q,t}^{\ext}$ and the notion of \textit{calibrated} $\mathbb{B}_{q,t}^{\ext}$ modules. The extended algebra $\mathbb{B}_{q,t}^{\ext}$ contains additional $\Delta$-operators with specific relations motivated by the $\Delta$-operators in Macdonald theory. Calibrated $\mathbb{B}_{q,t}^{\ext}$ modules are those modules with a basis of joint eigenvectors for the $z_i$'s and the additional operators $\Delta_{p_m}$ with simple nonzero spectrum.

In the case of the polynomial representations of DAHAs, the $\mathbb{B}_{q,t}$ representation $\fL_{\bullet}(C_{\pol})$ has an extended action by $\mathbb{B}_{q,t}^{\ext}$ using $\Delta$-operators and this representation is calibrated. It is an interesting question to figure out exactly which properties of the family of DAHA modules $C_{\pol}$ allow for this extended calibrated action by $\mathbb{B}_{q,t}^{\ext}$. 
\end{remark}

\section{Compatible Sequences From AHA}
 In this section we give a method for defining compatible sequences. We will consider families of representations for the affine Hecke algebras $\sA_n$ in type $GL$ with special properties which we call \textit{pre-compatible}. These families of representations for $\sA_n$ can then be induced to give representations of the corresponding $\sD_n^{+}$ which can be shown to be compatible after carefully defining the correct connecting maps.

\begin{defn}
    Let $C= \left( (U^{(n)})_{n \geq n_0}, (\kappa^{(n)})_{n\geq n_0}\right)$ be a collection of $\mathbb{Q}(q,t)$-vector spaces and maps $\kappa^{(n)}: U^{(n+1)}\rightarrow U^{(n)}$ with $n_1 \geq 1.$ We call $C$ a \textbf{\textit{pre-compatible}} sequence if the following hold:
    \begin{itemize}
        \item Each $U^{(n)}$ is a graded $\sA_n$ module (grading is arbitrary)
        \item The maps $\kappa^{(n)}:U^{(n+1)} \rightarrow U^{(n)}$ are degree preserving $\sH_n$ module maps 
        \item $\kappa^{(n)}\pi^{(n+1)}T_n = \pi^{(n)}\kappa^{(n)}.$
    \end{itemize}

    Given any pre-compatible sequence $C$ we define the spaces $(V_{C}^{(n)})_{n\geq n_0}$ by 
    $$V_{C}^{(n)}:= \Ind_{\sA_n}^{\sD_{n}^{+}} U^{(n)}$$
    which we endow with the grading inherited by $\mathbb{Q}(q,t)[X_1,\ldots, X_n] \otimes U^{(n)}$ (which is isomorphic as a vector space). Define the maps $(\Pi_{C}^{(n)}:V_{C}^{(n+1)} \rightarrow V_{C}^{(n)} )_{n\geq n_0}$ by 
    $$\Pi_{C}^{(n)}(X_1^{\alpha_1}\cdots X_{n+1}^{\alpha_{n+1}} \otimes v):= \mathbbm{1}(\alpha_{n+1} = 0) \otimes \kappa^{(n)}(v).$$ We will write $\Ind(C)$ for the family 
$$\Ind(C):= \left( (V_{C}^{(n)})_{n \geq n_0}, (\Pi_{C}^{(n)})_{n\geq n_0}\right).$$
\end{defn}

\begin{prop}\label{pre-compatible implies compatible}
    If $C$ is pre-compatible then $\Ind(C)$ is compatible.
\end{prop}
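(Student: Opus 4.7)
The plan is to verify the five defining conditions of a compatible sequence for $\Ind(C)$ in turn. Three of them are essentially free from the construction. That each $V_C^{(n)} = \Ind_{\sA_n}^{\sD_n^{+}} U^{(n)}$ is a graded $\sD_n^{+}$-module with the inherited grading, and that $\Pi_C^{(n)}$ is degree-preserving, both follow immediately from the definitions once we use that $\kappa^{(n)}$ is degree-preserving. The vanishing $\Pi_C^{(n)} X_{n+1} = 0$ is immediate from the defining formula of $\Pi_C^{(n)}$, since multiplying by $X_{n+1}$ raises the $X_{n+1}$-exponent of any monomial to at least $1$, which the indicator kills.

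For the $\sA_n^{X}$-intertwining condition I would treat the $X_i$'s ($1 \leq i \leq n$) and the $T_i$'s ($1 \leq i \leq n-1$) separately. The $X_i$ case is a direct computation on monomials. For the $T_i$ case the key observation is that inside $\sD_{n+1}^{+}$ the element $T_i$ commutes with $X_{n+1}$ (since $i+1 \leq n$), so $T_i$ preserves both summands of the decomposition $V_C^{(n+1)} = \bigl(\mathbb{Q}(q,t)[X_1,\ldots,X_n] \otimes U^{(n+1)}\bigr) \oplus \bigl(X_{n+1}\cdot V_C^{(n+1)}\bigr)$. On the first summand the $T_i$-action is computed entirely within the $\sA_n^{X}$-action on polynomials and the $\sH_n$-action on $U^{(n+1)}$, and the hypothesis that $\kappa^{(n)}$ is an $\sH_n$-module map then yields the desired commutation.

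The substance of the argument is the last condition, $\Pi_C^{(n)} \pi^{(n+1)} T_n = \pi^{(n)} \Pi_C^{(n)}$, which I would test on an arbitrary monomial $X_1^{\alpha_1}\cdots X_{n+1}^{\alpha_{n+1}} \otimes v$. Two ingredients drive the computation. First, the Demazure--Lusztig type expansion
\[
T_n X_n^{\alpha_n} X_{n+1}^{\alpha_{n+1}} = X_n^{\alpha_{n+1}} X_{n+1}^{\alpha_n} T_n + R,
\]
obtained by iterating the relations $T_n X_n = X_{n+1} T_n + (1-q) X_n$ and $T_n X_{n+1} = X_n T_n + (q-1) X_n$, where the remainder $R$ is a sum of monomials in $X_n, X_{n+1}$ each of which carries a strictly positive power of $X_n$. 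Second, the identities $\pi^{(n+1)} X_i = X_{i+1} \pi^{(n+1)}$ for $1 \leq i \leq n$ and $\pi^{(n+1)} X_{n+1} = t X_1 \pi^{(n+1)}$, the latter derivable from $Y_1 X_1\cdots X_{n+1} = tX_1\cdots X_{n+1} Y_1$ together with $\pi^{(n+1)} = q^{-(n+1)} Y_1 T_1\cdots T_n$. Using these identities to push $\pi^{(n+1)}$ past the polynomial factor, the leading term of $\pi^{(n+1)} T_n$ applied to the monomial becomes
\[
t^{\alpha_n}\, X_1^{\alpha_n} X_2^{\alpha_1}\cdots X_n^{\alpha_{n-1}} X_{n+1}^{\alpha_{n+1}} \otimes \pi^{(n+1)} T_n v,
\]
while each monomial in $R$ produces, after the same push, a monomial whose $X_{n+1}$-exponent equals its original $X_n$-exponent and is therefore at least $1$. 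Consequently $\Pi_C^{(n)}$ annihilates every contribution from $R$, and the leading term survives precisely when $\alpha_{n+1} = 0$, in which case its image under $\Pi_C^{(n)}$ equals $t^{\alpha_n}\, X_1^{\alpha_n} X_2^{\alpha_1}\cdots X_n^{\alpha_{n-1}} \otimes \kappa^{(n)}(\pi^{(n+1)} T_n v)$. The pre-compatibility hypothesis $\kappa^{(n)} \pi^{(n+1)} T_n = \pi^{(n)} \kappa^{(n)}$ together with the analogous $\pi^{(n)} X_n = t X_1 \pi^{(n)}$ identity in $\sD_n^{+}$ identifies this with $\pi^{(n)}\bigl(X_1^{\alpha_1}\cdots X_n^{\alpha_n} \otimes \kappa^{(n)}(v)\bigr) = \pi^{(n)} \Pi_C^{(n)}(X^\alpha \otimes v)$; when $\alpha_{n+1} \geq 1$ both sides are zero.

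The main obstacle is the bookkeeping in the Demazure--Lusztig expansion and the subsequent pushforward of $\pi^{(n+1)}$; the rest is a definition chase, but the pre-compatibility relation is exactly tailored so that the unique surviving leading term matches $\pi^{(n)}\Pi_C^{(n)}$.
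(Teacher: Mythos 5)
Your proposal is correct and follows essentially the same route as the paper: the easy conditions are dispatched by construction, and the key relation $\Pi_C^{(n)}\pi^{(n+1)}T_n = \pi^{(n)}\Pi_C^{(n)}$ is verified on monomials via the Demazure--Lusztig expansion of $T_nX_n^{\alpha_n}X_{n+1}^{\alpha_{n+1}}$, pushing $\pi^{(n+1)}$ through with $\pi X_i = X_{i+1}\pi$ and $\pi X_{n+1} = tX_1\pi$, killing the remainder by $\Pi_C^{(n)}X_{n+1}=0$, and invoking pre-compatibility for the surviving term. Your explicit check of $T_i$-equivariance of $\Pi_C^{(n)}$ via the splitting off of $X_{n+1}V_C^{(n+1)}$ is a welcome bit of detail the paper leaves as ``by construction,'' but it is not a different argument.
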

\begin{proof}
    By construction each space $V_{C}^{(n)}$ is a graded $\sD_n^{+}$ module and the maps $\Pi_{C}^{(n)}$ are degree preserving $\sA_n^{X}$ maps with $\Pi_{C}^{(n)}X_{n+1} = 0.$ Thus we only need to show that $\Pi_{C}^{(n)}\pi^{n+1}T_n = \pi^{(n)}\Pi_{C}^{(n)}.$ To see this we have the following:

    \begin{align*}
        &\Pi_{C}^{(n)}\pi^{(n+1)}T_n(X_1^{\alpha_1}\cdots X_{n+1}^{\alpha_{n+1}} \otimes v) \\
        &= X_2^{\alpha_1}\cdots X_n^{\alpha_{n-1}}\Pi_{C}^{(n)}\pi^{(n+1)}T_n(X_n^{\alpha_n}X_{n+1}^{\alpha_{n+1}} \otimes v)\\
        &= X_2^{\alpha_1}\cdots X_n^{\alpha_{n-1}}\Pi_{C}^{(n)}\pi^{(n+1)} \left(\left(X_{n}^{\alpha_{n+1}}X_{n+1}^{\alpha_n}T_n + (1-q)X_{n}\frac{X_n^{\alpha_n}X_{n+1}^{\alpha_{n+1}}- X_{n}^{\alpha_{n+1}}X_{n+1}^{\alpha_n}}{X_n - X_{n+1}}\right) \otimes v \right) \\
        &= X_2^{\alpha_1}\cdots X_n^{\alpha_{n-1}}\Pi_{C}^{(n)}\pi^{(n+1)} \left(X_{n}^{\alpha_{n+1}}X_{n+1}^{\alpha_n}\otimes T_n v\right) + (1-q)X_2^{\alpha_1}\cdots X_n^{\alpha_{n-1}}\Pi_{C}^{(n)}\pi^{(n+1)}X_{n}\frac{X_n^{\alpha_n}X_{n+1}^{\alpha_{n+1}}- X_{n}^{\alpha_{n+1}}X_{n+1}^{\alpha_n}}{X_n - X_{n+1}}(1  \otimes v)  \\
        &= X_2^{\alpha_1}\cdots X_n^{\alpha_{n-1}}\Pi_{C}^{(n)}X_{n+1}^{\alpha_{n+1}}(tX_{1})^{\alpha_n} \pi^{(n+1)} \left( 1 \otimes T_n v\right)  \\
        &~~~+ (1-q)X_2^{\alpha_1}\cdots X_n^{\alpha_{n-1}}\Pi_{C}^{(n)}X_{n+1}\pi^{(n+1)}\frac{X_n^{\alpha_n}X_{n+1}^{\alpha_{n+1}}- X_{n}^{\alpha_{n+1}}X_{n+1}^{\alpha_n}}{X_n - X_{n+1}}(1  \otimes v)\\
        &=  X_2^{\alpha_1}\cdots X_n^{\alpha_{n-1}}\Pi_{C}^{(n)}X_{n+1}^{\alpha_{n+1}}(tX_{1})^{\alpha_n} \pi^{(n+1)} \left( 1 \otimes T_n v\right) \\
        &= \mathbbm{1}(\alpha_{n+1} = 0) (tX_{1})^{\alpha_n} X_2^{\alpha_1}\cdots X_n^{\alpha_{n-1}} \Pi_{C}^{(n)}( 1\otimes \pi^{(n+1)}T_n(v)) \\
        &= \mathbbm{1}(\alpha_{n+1} = 0) (tX_{1})^{\alpha_n} X_2^{\alpha_1}\cdots X_n^{\alpha_{n-1}}\otimes \kappa^{(n)}(\pi^{(n+1)}T_n(v))\\
        &= \mathbbm{1}(\alpha_{n+1} = 0) (tX_{1})^{\alpha_n} X_2^{\alpha_1}\cdots X_n^{\alpha_{n-1}}\otimes \pi^{(n)}\kappa^{(n)}(v)\\
        &= \mathbbm{1}(\alpha_{n+1} = 0) (tX_{1})^{\alpha_n} X_2^{\alpha_1}\cdots X_n^{\alpha_{n-1}}\pi^{(n)} \otimes \kappa^{(n)}(v)\\
        &= \pi^{(n)} \left(\mathbbm{1}(\alpha_{n+1} = 0)X_1^{\alpha_1}\cdots X_{n}^{\alpha_n} \otimes \kappa^{(n)}(v) \right)\\
        &= \pi^{(n)}\Pi_{C}^{(n)}(X_1^{\alpha_1}\cdots X_{n+1}^{\alpha_{n+1}} \otimes v).\\
    \end{align*}
    Thus $\Pi_{C}^{(n)}\pi^{(n+1)}T_n = \pi^{(n)}\Pi_{C}^{(n)}$ and so $\Ind(C)$ is compatible. 
\end{proof}

We will now give a large family of pre-compatible sequences built from Young diagrams. The modules in these sequences are the same (up to changing conventions) as the modules in \cite{DL_2011} and the author's prior paper \cite{weising2023murnaghantype}

\begin{defn}
    Define the $\mathbb{Q}(q,t)$-algebra homomorphism $\rho_n:\sA_n \rightarrow \sH_n$ by 
    \begin{itemize}
        \item $\rho_n(T_i) = T_i$ for $1\leq i \leq n-1$
        \item $\rho_n(\pi^{(n)}) = T_1^{-1}\cdots T_{n-1}^{-1}.$ 
    \end{itemize}
    For a $\sH_n$-module $V$ we will denote by $\rho_n^{*}(V)$ the $\sA_n$-module with action defined for $v \in V$ and $X \in \sA_n$ by $X(v) := \rho_n(X)(v).$
\end{defn}

\begin{defn}\label{vv polynomial spaces}
    Let $\lambda$ be a Young diagram. We will write $\SYT(\lambda)$ for the set of Young tableaux of shape $\lambda.$ Let $n_{\lambda}:= |\lambda| + \lambda_1$ and for $n \geq n_{\lambda}$ define $\lambda^{(n)}:= (n-|\lambda|, \lambda).$ Given a box, $\square$, in a Young diagram $\lambda$ we define the \textbf{content} of $\square$ as $c(\square) := a-b$ where $\square = (a,b)$ as drawn in the $\mathbb{N}\times \mathbb{N}$ grid (in \textit{English notation}). We will write $c_{\tau}(i) = c(\square)$ where $\tau(\square) = i.$ We order the set $\SYT(\lambda)$ by defining the cover relation $s_i(\tau) > \tau$ if $c_{\tau}(i)-c_{\tau}(i+1) > 1.$ Define the $\sH_n$-module $S_{\lambda}$ as the space with basis $e_{\tau}$ labelled by Young tableaux $\tau \in \SYT(\lambda)$ and $\sH_n$ action given by:
    \begin{itemize}
        \item $T_i(e_{\tau}) = e_{\tau}$ if $i,i+1$ are in the same row of $\tau$
        \item $T_i(e_{\tau}) = -qe_{\tau}$ if $i,i+1$ are in the same column of $\tau$
        \item $T_i(e_{\tau}) = e_{s_i(\tau)} + \frac{(1-q)q^{c_{\tau}(i)}}{q^{c_{\tau}(i)} - q^{c_{\tau}(i+1)}} e_{\tau}$ if $s_i(\tau) > \tau$
        \item $T_i(e_{\tau}) =  -\frac{(q^{c_{\tau}(i+1)+1}-q^{c_{\tau}(i)})(q^{c_{\tau}(i)+1}-q^{c_{\tau}(i+1)})}{(q^{c_{\tau}(i+1)}-q^{c_{\tau}(i)})^2} e_{s_i(\tau)} + \frac{(1-q)q^{c_{\tau}(i)}}{q^{c_{\tau}(i)} - q^{c_{\tau}(i+1)}} e_{\tau}$ if $s_i(\tau) < \tau.$
    \end{itemize}

For $n \geq n_{\lambda}$ define the $\sA_n$ modules $U_{\lambda}^{(n)}:= \rho_n^{*}(S_{\lambda^{(n)}})$ and maps $\kappa_{\lambda}^{(n)}: U_{\lambda}^{(n+1)} \rightarrow U_{\lambda}^{(n)}$ given for $\tau \in \SYT(\lambda^{(n+1)})$ as

$$\kappa_{\lambda}^{(n)}(e_{\tau}) := \begin{cases}
    e_{\tau|_{\lambda^{(n)}}} & \tau(\square_0) = n+1\\
    0 & \tau(\square_0) \neq n+1.
     \end{cases} $$

where $\square_0$ is the unique square in $\lambda^{(n+1)}/\lambda^{(n)}.$

We consider the $\sA_n$ modules $U_{\lambda}^{(n)}$ as graded with the trivial grading i.e. $U_{\lambda}^{(n)} = U_{\lambda}^{(n)}(0).$ We will write $C_{\lambda}$ for the family 
$$C_{\lambda}:= \left( (U_{\lambda}^{(n)})_{n \geq n_{\lambda}}, (\kappa_{\lambda}^{(n)})_{n\geq n_{\lambda}}\right).$$
\end{defn}

\begin{remark}
    It is a straightforward (but tedious) exercise to show that the $\sH_n$ module defined on $S_{\lambda}$ as above is actually a $\sH_n$ module. We leave this to the reader. In fact this representation is the unique irreducible $\sH_n$ module corresponding to the shape $\lambda$. Definition \ref{vv polynomial spaces} follows the conventions of the author's paper \cite{weising2023murnaghantype} which are slightly different than the definitions of the spaces of \textit{vector valued polynomials} considered by Dunkl-Luque in \cite{DL_2011}. As constructed, the elements $e_{\tau}$ of the $\sA_{n}$ module $U_{\lambda}^{(n)}$ are \textbf{not} weight vectors for the Cherednik elements $Y_i$ but rather for the reversed orientation Cherednik elements $\theta_i$ given by $\theta_i = q^{i-1}T_{i-1}^{-1}\cdots T_1^{-1} \pi T_{n-1}\cdots T_i.$ Explicitly, we have that for $\tau \in \SYT(\lambda)$ and $1 \leq i \leq n $, $\theta_i(e_{\tau}) = q^{c_{\tau}(i)} e_{\tau}.$
\end{remark}

\begin{thm}\label{Murnaghan-type reps}
    For any $\lambda,$ $\Ind(C_{\lambda})$ is a compatible sequence with $\fL_{\bullet}(\Ind(C_{\lambda}))$ a \textit{nonzero} graded $\mathbb{B}_{q,t}$ module.
\end{thm}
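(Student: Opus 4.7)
The plan is to split the theorem into three subclaims: (i) $C_\lambda$ is a pre-compatible sequence, (ii) hence by Proposition \ref{pre-compatible implies compatible} together with Theorem \ref{main theorem} the space $\fL_\bullet(\Ind(C_\lambda))$ carries a graded $\mathbb{B}_{q,t}$-module structure, and (iii) this module is nonzero. Parts (i) and (iii) are where the real work lives; part (ii) is then a bookkeeping invocation.

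For (i), the grading and degree-preservation conditions hold trivially because each $U_\lambda^{(n)}$ sits in degree zero. The core check I would perform is that $\kappa_\lambda^{(n)}$ is $\sH_n$-equivariant. Each generator $T_i$ with $1\le i\le n-1$ acts on $e_\tau$ through only the positions of $i$ and $i+1$ (both $\le n$) and the contents $c_\tau(i),c_\tau(i+1)$; in particular, $s_i$ never moves $n+1$, so the indicator $[\tau(\square_0)=n+1]$ is preserved, and the contents of entries $\le n$ agree in $\tau$ and in $\tau|_{\lambda^{(n)}}$. A direct case-by-case check against the four cases of Definition \ref{vv polynomial spaces} then yields $\kappa_\lambda^{(n)}(T_ie_\tau)=T_i\kappa_\lambda^{(n)}(e_\tau)$. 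For the twisted $\pi$-relation, I would translate through $\rho$: one has
\begin{equation*}
\rho_{n+1}(\pi^{(n+1)}T_n)=T_1^{-1}\cdots T_n^{-1}T_n=T_1^{-1}\cdots T_{n-1}^{-1}=\rho_n(\pi^{(n)}),
\end{equation*}
so the identity $\kappa_\lambda^{(n)}\pi^{(n+1)}T_n=\pi^{(n)}\kappa_\lambda^{(n)}$ collapses to $\sH_n$-equivariance of $\kappa_\lambda^{(n)}$, which was just established. Part (ii) is then an immediate combination of Proposition \ref{pre-compatible implies compatible} and Theorem \ref{main theorem}.

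For (iii) I would exhibit an explicit nonzero coherent sequence. Fix $k\ge n_\lambda=|\lambda|+\lambda_1$, and for each $n\ge k$ let $\tau_n\in\SYT(\lambda^{(n)})$ be the tableau obtained by placing $k+1,k+2,\ldots,n$ consecutively in the last $n-k$ boxes of row one, and filling the remaining sub-diagram of shape $\mu:=(k-|\lambda|,\lambda_1,\lambda_2,\ldots)$ with $\{1,\ldots,k\}$ according to a fixed SYT of $\mu$ chosen once for all $n$. For $k+1\le i\le n-1$ the entries $i,i+1$ both lie in row one of $\tau_n$, hence $T_ie_{\tau_n}=e_{\tau_n}$; standard computation with Hecke-group-algebra sums then gives $\epsilon_k^{(n)}(1\otimes e_{\tau_n})=1\otimes e_{\tau_n}$, so $\omega_n:=X_1\cdots X_k\otimes e_{\tau_n}\in L_k(V_{C_\lambda}^{(n)})$ is manifestly nonzero. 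By construction $\tau_{n+1}(\square_0)=n+1$ and $\tau_{n+1}|_{\lambda^{(n)}}=\tau_n$, so the formula for $\Pi_{C_\lambda}^{(n)}$ yields $\Pi_{C_\lambda}^{(n)}(\omega_{n+1})=X_1\cdots X_k\otimes\kappa_\lambda^{(n)}(e_{\tau_{n+1}})=\omega_n$, so the sequence $(\omega_n)_{n\ge k}$ represents a nonzero element of $\fL_k(\Ind(C_\lambda))$.

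The only combinatorially substantive step is the four-case verification of $\sH_n$-equivariance of $\kappa_\lambda^{(n)}$; the twisted $\pi$-relation deflates cleanly because of the cancellation $\rho_{n+1}(\pi^{(n+1)}T_n)=T_1^{-1}\cdots T_{n-1}^{-1}$, which is a group-algebraic echo of the Ion--Wu stability relation. For non-triviality, the essential constraint is the bound $k\ge n_\lambda$: it guarantees that the sub-diagram $\mu$ is a valid partition and that the last $n-k$ entries of row one form a horizontal strip, so that the tableaux $\tau_n$ restrict compatibly across $n$.
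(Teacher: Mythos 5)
Your proof is correct, and parts (i)--(ii) follow the paper's own route: the paper likewise reduces pre-compatibility to the (asserted-as-routine) $\sH_n$-equivariance of $\kappa_\lambda^{(n)}$ plus the collapse $\rho_{n+1}(\pi^{(n+1)})T_n = T_1^{-1}\cdots T_{n-1}^{-1} = \rho_n(\pi^{(n)})$, and then invokes Proposition \ref{pre-compatible implies compatible}. (Small citation point: the $\mathbb{B}_{q,t}$-module structure on $\fL_\bullet$ comes from Definition \ref{main stable-limit construction} via Lemma \ref{stability lemma} and Lemma \ref{stable-limit of modules}; Theorem \ref{main theorem} is only the functoriality statement.) Where you genuinely diverge is the nonvanishing. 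The paper does not exhibit an element: it observes that $\fL_0(\Ind(C_\lambda))$ is precisely the Murnaghan-type representation $\widetilde{W}_\lambda$ of the positive elliptic Hall algebra from the author's earlier work, which is known to be infinite-dimensional, hence nonzero (and a follow-up remark then propagates infinite-dimensionality to all $\fL_k$ via injectivity of $d_+^k$). Your argument instead builds, for each $k\geq n_\lambda$, an explicit coherent sequence $\omega_n = X_1\cdots X_k\otimes e_{\tau_n}$ with $\tau_n$ obtained by extending a fixed tableau of shape $\lambda^{(k)}$ along the first row; the checks you flag (that $\mu=\lambda^{(k)}$ is a partition exactly when $k\geq n_\lambda$, that $T_i$-invariance for $i>k$ gives $\epsilon_k$-invariance, and that $\tau_{n+1}(\square_0)=n+1$ so $\Pi^{(n)}_{C_\lambda}(\omega_{n+1})=\omega_n$) all hold. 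This buys a self-contained, citation-free proof of nonvanishing that moreover shows every $\fL_k$ with $k\geq n_\lambda$ is nonzero directly, at the cost of not identifying $\fL_0$ with $\widetilde{W}_\lambda$ — an identification the paper wants anyway, since it motivates the name ``Murnaghan-type'' and yields infinite-dimensionality for free.
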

\begin{proof}
    It is easy from the explicit $T_i$ relations given in Definition \ref{vv polynomial spaces} to verify that for every $n \geq n_{\lambda}$ the map $\kappa_{\lambda}^{(n)}:U_{\lambda}^{(n+1)} \rightarrow U_{\lambda}^{(n)}$ is a $\sH_n$ module map. We therefore also have that 
    \begin{align*}
        & \kappa_{\lambda}^{(n)}\pi^{(n+1)}T_n \\
        &= \kappa_{\lambda}^{(n)}\rho_{n+1}(\pi^{(n+1)})T_n\\
        &= \kappa_{\lambda}^{(n)}T_1^{-1}\cdots T_{n}^{-1}T_n\\
        &= \kappa_{\lambda}^{(n)}T_1^{-1}\cdots T_{n-1}^{-1}\\
        &= T_1^{-1}\cdots T_{n-1}^{-1}\kappa_{\lambda}^{(n)} \\
        &= \pi^{(n)}\kappa_{\lambda}^{(n)}.\\
    \end{align*}
    Hence, $C_{\lambda}$ is a pre-compatible sequence and so by Proposition \ref{pre-compatible implies compatible} it follows that $\Ind(C_{\lambda})$ is a compatible sequence. Thus we may consider the graded $\mathbb{B}_{q,t}$ module $\fL_{\bullet}(\Ind(C_{\lambda})).$

    To show that $\fL_{\bullet}(\Ind(C_{\lambda}))$ is nonzero it suffices to show that $\fL_{0}(\Ind(C_{\lambda}))$ is nonzero. This is space is the stable-limit of the symmetrized spaces $\epsilon_0^{(n)}(\Ind_{\sA_n}^{\sD_n^{+}} U_{\lambda}^{(n)})$ with respect to the maps $\kappa_{\lambda}^{(n)}.$ However, this space is the \textit{Murnaghan-type representation} $\widetilde{W}_{\lambda}$ of the positive elliptic Hall algebra of shape $\lambda$ from the author's prior paper \cite{weising2023murnaghantype}. This space is infinite dimensional for any $\lambda$ and so clearly $\fL_{0}(\Ind(C_{\lambda}))$ is nonzero.
\end{proof}

We can show further that for all $k\geq 0,$ $\fL_{k}(\Ind(C_{\lambda}))$ is infinite dimensional. To see this note that $d_{+}^{k}: \fL_{0}(\Ind(C_{\lambda})) \rightarrow \fL_{k}(\Ind(C_{\lambda}))$ is given by 
$$ (q^{k-1}X_1T_1^{-1}\cdots T_{k-1}^{-1})\cdots (q^2X_1T_1^{-1}T_2^{-1})(qX_1T_1^{-1})(X_1)$$
which is clearly injective. Thus since $\fL_{0}(\Ind(C_{\lambda})) = \widetilde{W}_{\lambda}$ is infinite-dimensional the same is true for $\fL_{k}(\Ind(C_{\lambda}))$.

As the $\mathbb{B}_{q,t}$ modules $\fL_{\bullet}(\Ind(C_{\lambda}))$ contain the Murnaghan-type representation $\widetilde{W}_{\lambda}$ of EHA we will refer to these modules as the $\mathbb{B}_{q,t}$ modules of Murnaghan-type.

\begin{remark}
    The author conjectures that each of the Murnaghan-type $\mathbb{B}_{q,t}$ modules, $\fL_{\bullet}(\Ind(C_{\lambda}))$, has an extended action by $\mathbb{B}_{q,t}^{\ext}$ and that these extended modules are calibrated. Evidence for this conjecture comes from the author's prior paper \cite{weising2023murnaghantype} where the author constructs $\Delta$-operators on the space $\widetilde{W}_{\lambda} = \fL_{0}(\Ind(C_{\lambda}))$ which have distinct nonzero spectrum. Extending these $\Delta$-operators to the whole space $\fL_{\bullet}(\Ind(C_{\lambda}))$ is nontrivial. The author will investigate this in a subsequent paper.
\end{remark}

\printbibliography

\end{document}